\numberwithin{equation}{section}
\newtheorem{thm}{Theorem}[section]
\newtheorem{lemma}[thm]{Lemma}
\newtheorem{remark}[thm]{Remark}
\newtheorem{definition}[thm]{Definition}
\newcommand{\bs}{{\bf S}}
\newcommand{\bi}{{\bf I}}
\newcommand{\bX}{{\bf X}}
\newcommand{\bY}{{\bf Y}}
\newcommand{\bt}{{\bf T}}
\newcommand{\bx}{{\bf x}}
\newcommand{\by}{{\bf y}}
\newcommand{\bd}{{\bf D}}
\newcommand{\bb}{{\bf B}}
\newcommand{\ba}{{\bf A}}
\newcommand{\br}{{\bf r}}
\newcommand{\bR}{{\bf R}}
\newcommand{\bS}{\boldsymbol\Sigma}
\renewcommand{\(}{\left(}
\renewcommand{\)}{\right)}
\newcommand{\E}{{\rm E}}
\newcommand{\tr}{{\rm tr}}
\newcommand{\mb}{\mathbf}
\newcommand{\re}{{\rm E}}
\newcommand{\rtr}{{\rm tr}}
\begin{document}

\title[singular value distribution of data matrices]{On the singular value distribution of large-dimensional data matrices whose columns have different correlations}
\author{Yanqing Yin}
\thanks{Yanqing Yin was partially supported by an NSF Grant
China 11701234, the Priority Academic Program Development of Jiangsu Higher Education Institutions
and a Research Support Project of Jiangsu Normal University (17XLR014)).}
\address{School of Mathematics and Statistics, Jiangsu Normal University, Xuzhou, P.R.C., 221116.}
\email{yinyq@jsnu.edu.cn}
\subjclass{Primary 15B52, 60F15, 62E20;
Secondary 60F17}

\begin{abstract}
Suppose $\bY_n=\(\mb y_1,\cdots,\mb y_n\)$ is a $p\times n$ data matrix whose columns $\mb y_j, 1\leq j\leq n$ have different correlations. The asymptotic spectral property of $\bs_n=\frac1n\bY_n\bY^*_n$ when $p$ increases with $n$ has recently been considered by some authors. This model has become increasingly popular because of its wide applications in multi-user multiple-input single-output (MISO) systems and robust signal processing. In this paper, for more convenient applications in practice, we will investigate the spectral distribution of $\bs_n$ under milder moment conditions than the existing work. Some applications of this model are also discussed.
\end{abstract}

\keywords{Sample covariance matrix; Data matrix; High dimensional statistics; Random matrix theory; LSD}

\maketitle

\section{Introduction and motivation}
The spectral analysis of sample covariance matrices has drawn increasing attention in recent years. Many methods of statistical inference involving population covariance matrices require the investigation of the properties, particularly the spectral property of the sample covariance matrices. See, for instance, \cite{anderson1958introduction,Johnstone2008Multivariate,Johnstone2009APPROXIMATE,Johnstone2009On,Paul2014Random}. Consider the $p\times n$ data matrix $\mb D_n=\(\mb d_1,\cdots,\mb d_n\)$, where $\mb d_j, 1\leq j\leq n$ are $n$ independent samples drawn from a $p$-dimensional distribution with zero mean and covariance matrix $\Sigma_p$. Then, when $p$ is fixed and $n\to \infty$, the sample covariance matrix $\frac1n\mb D_n\mb D_n^*$ is a consistent estimator of $\Sigma_p$. However, statistics has opened a new area, where we must work with more complex data. This shift challenges the classical theory in statistics and spurs the developments of new theories. Recent advances in random matrix theory (RMT) have clearly shown that in the asymptotic regime, where $p$ and $n$ go to infinity at the same pace, the sample covariance matrix is no longer consistent. To illustrate this phenomenon, we first introduce the following definitions.
\begin{definition}
Let $\ba$ be an $n \times n$ Hermitian matrix, and denote its eigenvalues by ${\lambda_j}(\ba), j = 1,2, \cdots, n$. Then, the empirical spectral distribution (ESD) of $\ba$ is defined by
$$F^{\ba}\left(x\right) =\frac{1}{n}\sum\limits_{j = 1}^n {I\left({\lambda _j}(\ba) \le x\right)},$$ where ${I\left(\mathbb{A}\right)}$ is the indicator function of an event ${\mathbb{A}}$.
\end{definition}
\begin{definition}
The Stieltjes transform of ${F^{\mathbf A}}\left(x\right)$, which is the ESD of an $n \times n$ Hermitian  matrix $\mb A$, is given by
$${ m}_{F^{\ba}}\left(z\right)=\int_{-\infty}^{+\infty}\frac{1}{x-z}d{F^{\mathbf A}}\left(x\right),$$
where $z=u+ iv\in\mathbb{C}^+\cup {\rm Supp}(F^{\mathbf A})^c$. Here ${\rm Supp}(F^{\mathbf A})^c$ stands for the complement of the support of $F^{\mathbf A}$.
\end{definition}
Suppose $\mb d_j=\mb \Sigma_p^{1/2}\mb x_j, 1\leq j\leq n$, where $\Sigma_p^{1/2}$ is the Hermitian square root of $\Sigma_p$, and $\mb X_n=\(\mb x_1,\mb x_2,\cdots,\mb x_n\)$ is a $p\times n$ matrix, whose elements are i.i.d. complex random variables with 0 means and unit variances. When $n\to \infty$, $p/n\to c\in(0,\infty),$ $F^{\bS_p}\xrightarrow{d}{H}$, and the sequence $\(\mb \Sigma_p\)_n$ is bounded in the spectral norm. Then, the famous M-P law, which was first proven in \cite{marchenko1967distribution} and developed by \cite{Wachter1978The,RePEc:eee:jmvana:v:20:y:1986:i:1:p:50-68,silverstein1995empirical,silverstein1995strong}, states that almost surely, the ESD $F^{\bs_n}$ of the sample covariance matrix $\frac1n\mb D_n\mb D_n^*=\frac{1}{n}\mb \Sigma_p^{1/2}\mb X_n\mb X_n^*\mb \Sigma_p^{1/2}$ weakly tends to a nonrandom p.d.f. $F$ as $n\to \infty$. For each $z\in\mathbb{C}^+$, $m(z)=m_F(z)$ is the unique solution to the equation
\begin{align}\label{al3}
m(z)=\int\frac1{t(1-c-czm(z))-z}dH(t)
\end{align}
in the set $\left\{m(z)\in\mathbb{C}^+: -(1-c)/z+cm(z)\in\mathbb{C}^+\right\}$.
For more details, we refer the reader to \cite{marchenko1967distribution,bai2010spectral,anderson2010introduction,pastur2011eigenvalue}.

As previously mentioned, statisticians are currently facing increasingly complex data. Thus, much effort has been devoted to developing the theoretical results and making them more applicable in practice. \cite{Bai2008Large} worked with matrices of the form $\frac1n\mb D_n\mb D_n^*$, where the columns of $\mb D_n$ are independent and share the same covariance matrices. They showed the validity of the M-P law when a condition of quadratic forms is satisfied. Another significant development in this direction was made by \cite{Zhang2006Spectral,Karoui2009Concentration}. Under some assumptions, they proved the existence of the limiting ESD for separable sample covariance matrices of the form $\frac1n \mb T_{2,n}^{1/2}\mb X_n\mb T_{1,n}\mb X_n^*\mb T_{2,n}^{1/2}$. The Stieltjes transform of the limiting ESD is the unique solution of a coupled functional equation.
In fact, they show that under the conditions of
\begin{itemize}
\item[(1)] $\mb X_n=(x_{jl})$ is $N\times n$, consisting of independent standard complex random variables satisfying the Lindeberg-type condition, i.e., for each $\delta>0$, as $n\to\infty$, $
    \frac1{\delta^2nN}\sum_{j,l}\re\left(|x_{jl}|^2I\left(|x_{jl}|>\delta\sqrt n\right)\right)\to0;$
\item[(2)] $\bt_{1n}$ is an $n\times n$ Hermitian matrix and $\bt_{2n}$ is an $N\times N$ nonnegative definite Hermitian matrix, both of which are independent of $\mb X_n$;
\item[(3)] with probability 1, as $n\to\infty$, the empirical spectral distributions of $\bt_{1n}$ and $\bt_{2n}$, which are denoted by $H_{1n}$ and $H_{2n}$, weakly converge to two probability functions $H_1$ and $H_2$, respectively; and
\item[(4)] $n/N\to c>0$ when $n\to\infty$,
\end{itemize}
with probability $1$, as $n\to\infty$, if $H_1$ and $H_2$ are not degenerate, then the ESD of $\bb_n=\frac1N\bt_{2n}^{1/2}\bX_n\bt_{1n}\bX_n^*\bt_{2n}^{1/2}$ weakly converges to a non-random probability distribution function $F$, whose Stieltjes transform $m(z)$ is determined by the following system of equations (\ref{eqs}), for each $z\in\mathbb{C}^+$,
\begin{align}\label{eqs}
\begin{cases}
m(z)=-z^{-1}(1-c)-z^{-1}c\int\frac1{1+q(z)x}dH_1(x)\\
m(z)=-z^{-1}\int\frac1{1+p(z)y}dH_2(y)\\
m(z)=-z^{-1}-p(z)q(z).
\end{cases}
\end{align}

In this paper, we consider data matrices whose columns may have different correlations. The matrix model, denoted as the different correlation model (DCM) for convenience, is defined as follows.
\begin{definition}[DMC]\label{def}
Assume that
\begin{itemize}
\item[(a)] $\{\mb x_{j,k}\},j,k=1,2,\cdots,$ are independent and identically distributed (i.i.d.) complex random variables with mean zero, variance 1 and $\re|x_{11}|^{4}=\mu<\infty $;
\item[(b)] $\bY_n=(y_{jk})$ is a $p\times n$  matrix and $\by_k=(y_{1k},\cdots,y_{pk})'=\bb_k\bx_k$ for $k=1,\cdots,n$;
\item[(c)] for $k=1,\cdots,n$, $\bb_k$ is a $p\times m_k$ non-random complex matrix, $\bS_k=\bb_k\bb_k^*$, and the spectral norm of $\bS_k$, which is denoted as $\|\bS_k\|$, is bounded in $n$;
\item[(d)] $\ba_n$ is a $p\times p$ Hermitian nonnegative definite matrix; and
\item[(e)] $c_n=p/n\to c\in(0,\infty)$, $c_{nk}=m_k/n\to c_k\in(0,\infty)$ as $n\to\infty$ and $\limsup_{n\to\infty}\sup_{1\le k\le n}c_k\le M$.
\end{itemize}
Then, $\bs_n=\frac1n\bY_n\bY^*_n+\ba_n$ is a matrix that follows the DCM.
\end{definition}
This model has attracted increasing popularity because of its wide applications in multi-user multiple-input single-output (MISO) systems and robust signal processing, as will be shown later.
\cite{Sebastian2012Large} first introduced the above model and showed some spectral property of $\bs_n$, but the entries in their model are assumed to have at least a finite eight-order moment, which is much higher than ours. Then, in \cite{Kammoun2016No}, the authors proved the no-outside results for this model when the random variables were Gaussian.

Our main result of this paper is as follows:
\begin{thm}\label{thd1}
Suppose $\bs_n$ is a matrix that follows the DCM; then, for any $z\in\mathbb{C}^+$, as $n\to\infty$, the distance between the Stieltjes transforms of $F^{\bs_n}(x)$ and \begin{align}\label{eqst0}
m_n^0(z)=\frac1p\rtr\left(\frac1n\sum_{k=1}^n\frac{\bS_k}{1+e_{nk}(z)}+\ba_n-z\bi_p\right)^{-1}
\end{align} convergence almost surely to $0$,
where the functions $e_{n1}(z), \cdots, e_{nn}(z)$ form the unique solution of
\begin{align}\label{eqst}
e_{nk}(z)=\frac1n\rtr\left[\bS_k\left(\frac1n\sum_{j=1}^n\frac{\bS_j}{1+e_{nj}(z)}+\ba_n-z\bi_p\right)^{-1}\right],
\end{align}
which is the Stieltjes transformation of a nonnegative finite measure on $\mathbb{R}^+$.
\end{thm}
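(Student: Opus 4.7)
The plan is to decompose $m_{F^{\bs_n}}(z) - m_n^0(z)$ into a stochastic fluctuation $m_{F^{\bs_n}}(z) - \E m_{F^{\bs_n}}(z)$ and a deterministic bias $\E m_{F^{\bs_n}}(z) - m_n^0(z)$, and handle each piece for every fixed $z\in\mathbb{C}^+$. Set $\bR(z) = (\bs_n - z\bi_p)^{-1}$, $\bs_n^{(k)} = \bs_n - n^{-1}\by_k\by_k^*$, and $\bR^{(k)}(z) = (\bs_n^{(k)} - z\bi_p)^{-1}$. First I would perform a truncation–centralization–rescaling step: using $\E|x_{11}|^4 = \mu < \infty$ together with the rank inequality $\|F^{\bs_n}-F^{\widetilde\bs_n}\|\le \mathrm{rank}(\bY_n-\widetilde\bY_n)/p$, one reduces to the case $|x_{jk}|\le \eta_n\sqrt n$ with $\eta_n\to 0$, $\E x_{jk}=0$, $\E|x_{jk}|^2=1$, keeping a controlled fourth moment so that the quadratic-form lemma from \cite{Bai2008Large} can be applied with polynomial rate.

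For the fluctuation, I would use the martingale decomposition
\begin{align*}
\frac{1}{p}\rtr\bR(z) - \E\frac{1}{p}\rtr\bR(z) = \sum_{k=1}^n (\E_k - \E_{k-1})\frac{1}{p}\rtr\bR(z),
\end{align*}
where $\E_k$ denotes conditional expectation given $\bx_1,\ldots,\bx_k$. The Sherman–Morrison identity
\begin{align*}
\bR = \bR^{(k)} - \frac{n^{-1}\bR^{(k)}\by_k\by_k^*\bR^{(k)}}{1 + n^{-1}\by_k^*\bR^{(k)}\by_k}
\end{align*}
shows that $(\E_k-\E_{k-1})\tfrac{1}{p}\rtr\bR$ depends on $\bx_k$ only through a rank-one perturbation, and the bound $|\by_k^*\bR^{(k)}\by_k|/|\mathrm{Im}(1+n^{-1}\by_k^*\bR^{(k)}\by_k)|\le \|\bS_k\|/\mathrm{Im}\,z$ forces each increment to be $O((n\,\mathrm{Im}\,z)^{-1})$. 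A Burkholder fourth-moment inequality, exploiting the truncated fourth moment, then yields $\E|m_{F^{\bs_n}}-\E m_{F^{\bs_n}}|^4 = O(n^{-2})$, and Borel–Cantelli delivers almost sure convergence to zero.

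The bias term is the heart of the argument. Starting from $\bs_n\bR = \bi_p + z\bR$ and expanding columnwise gives
\begin{align*}
\bi_p + z\bR = \ba_n\bR + \frac{1}{n}\sum_{k=1}^n \frac{\by_k\by_k^*\bR^{(k)}}{1+n^{-1}\by_k^*\bR^{(k)}\by_k}.
\end{align*}
I would then perform three successive replacements, each controlled in $L^2$: (i) swap $n^{-1}\by_k^*\bR^{(k)}\by_k$ for $n^{-1}\rtr(\bS_k\bR^{(k)})$ via the quadratic-form concentration lemma, with error of order $o(1)$ uniformly in $k$; (ii) swap $n^{-1}\rtr(\bS_k\bR^{(k)})$ for $n^{-1}\rtr(\bS_k\bR)$ via the interlacing bound $|\rtr(\bS_k(\bR-\bR^{(k)}))| = O(\|\bS_k\|/\mathrm{Im}\,z)$; and (iii) swap the resulting trace for $e_{nk}(z)$ using the defining relation (\ref{eqst}). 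Taking expectation, reorganizing, and multiplying by $\bt(z) := (n^{-1}\sum_j \bS_j/(1+e_{nj}(z))+\ba_n-z\bi_p)^{-1}$ produces
\begin{align*}
\E\bR(z) - \bt(z) = \bt(z)\,\Delta_n(z)\,\E\bR(z),
\end{align*}
where $\Delta_n(z)$ has vanishing normalized trace against any bounded deterministic matrix. Taking normalized trace then yields $\E m_{F^{\bs_n}}(z) - m_n^0(z)\to 0$.

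The last ingredient is to show that (\ref{eqst}) has a unique solution within $n$-tuples of Stieltjes transforms of nonnegative finite measures on $\mathbb{R}^+$. Existence follows from a normal-family/Montel argument applied to the iteration $e_{nk}^{(\ell+1)}(z) = n^{-1}\rtr[\bS_k(n^{-1}\sum_j \bS_j/(1+e_{nj}^{(\ell)}(z)) + \ba_n - z\bi_p)^{-1}]$ starting from $e_{nk}^{(0)}(z)=-1/z$, using that each iterate is a Stieltjes transform with mass bounded by $\|\bS_k\|$; uniqueness follows from a Silverstein–Choi-type contraction on $\mathbb{C}^+$, where positivity of $\mathrm{Im}\,e_{nk}(z)$ ensures $|1+e_{nk}(z)|$ is bounded away from zero. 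The main obstacle I anticipate is carrying out step (i) under only a finite fourth moment while the $n$ denominators $1+e_{nk}(z)$ vary with $k$: the $o(1)$ control must be uniform in $k$ and compatible with the coupling in (\ref{eqst}), and this is where the analysis genuinely departs from the classical case of a common covariance.
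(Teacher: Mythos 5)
Your route is genuinely different from the paper's. You split $m_n(z)-m_n^0(z)$ into a fluctuation $m_n-\E m_n$ (martingale differences, Burkholder, Borel--Cantelli) and a bias $\E m_n-m_n^0$ (deterministic-equivalent computation in expectation), and you propose to prove existence/uniqueness of the system yourself; this is the classical Bai--Silverstein/Wagner-et-al.\ scheme. The paper never takes expectations: after truncation it argues pathwise, introducing the \emph{random} intermediate matrix $\bR_n(z)=\frac1n\sum_k\beta_{nk}(z)\bS_k+\ba_n-z\bi_p$ with $\beta_{nk}(z)=(1+n^{-1}\rtr[\bS_k\bd_n^{-1}(z)])^{-1}$, proving $m_n(z)-\frac1p\rtr\bR_n^{-1}(z)\to0$ a.s.\ via a five-term decomposition, and then comparing $\bR_n(z)$ with the deterministic $\bR(z)$ built from the $e_{nk}$; existence/uniqueness is simply quoted from the cited prior work. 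The difference of route would be fine; the problem is a gap at the decisive step of your bias argument.

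Your step (iii), ``swap the resulting trace for $e_{nk}(z)$ using the defining relation,'' is circular as written: $\frac1n\rtr(\bS_k\bR)$ (or its expectation), with $\bR$ the true resolvent, is \emph{not} $e_{nk}(z)$ by definition --- $e_{nk}$ is defined only as the solution of the coupled system, and the whole point is to prove that the empirical quantities $\frac1n\rtr[\bS_k\bd_n^{-1}(z)]$ converge to that solution. What is needed is a quantitative stability estimate for the system of $n$ coupled equations: setting $s_{nk}=\frac1n\rtr[\bS_k\bd_n^{-1}(z)]-e_{nk}(z)$, one derives an identity of the form $s_{nk}=c_nw_{\bS_k}+\frac1{n^2}\sum_j\frac{\beta_{nj}(z)s_{nj}}{1+e_{nj}(z)}\rtr[\bS_k\bR_n^{-1}(z)\bS_j\bR^{-1}(z)]$, and hence $(1-\kappa)\max_k|s_{nk}|\le(c+1)\max_k|w_{\bS_k}|$ with a contraction factor $\kappa$ that is smaller than $1$ only on part of $\mathbb{C}^+$ (large $\Im z$); the conclusion is then extended to all $z\in\mathbb{C}^+$ by Vitali's theorem. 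Your ``Silverstein--Choi-type contraction'' addresses uniqueness of exact solutions, not stability of approximate solutions, so this analogue of the paper's Step 2 is missing, and the obstacle you flag (uniformity in $k$ in step (i)) is not where the difficulty actually lies. A secondary gap: the rank inequality covers the truncation but not recentralization and rescaling --- the matrix of column means $\bb_k\E\hat\bx_k$ is not low rank, and the rescaling error is $(1-\sigma^2)\sigma^{-2}\|\widetilde\bs_n\|$ --- which is exactly why the paper first establishes the almost sure polynomial bound $\|\bs_n\|\le n^{6/7}$ via the Bernstein-type matrix inequality before rescaling; your sketch needs such a norm bound (or an argument that avoids rescaling) under the fourth-moment-only assumption.
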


\begin{remark}
We can easily verify that when all $\bS_k$s are equal and $\mb A_n=\mb 0$ for all $n$, our theorem consists of the well-known M-P law. In fact, suppose $\Sigma_k=\Sigma$ for $k=1,\cdots,n$ and $\mb A_n=\mb 0$ for all $n$ in (\ref{eqst}), we have all $e_{nk}(z), k=1,\cdots,n$  are equal for given $z$. Denote $e_{nk}(z)=e_n(z), k=1,\cdots,n$ and combine (\ref{eqst0}) and (\ref{eqst}) we have
\begin{align*}
e_{n}(z)&=\frac1n\rtr\left[\bS\left(\frac1n\sum_{j=1}^n\frac{\bS}{1+e_{n}(z)}-z\bi_p\right)^{-1}\right]
=\frac1n\rtr\left[\bS\left(\frac{\bS}{1+e_{n}(z)}-z\bi_p\right)^{-1}\right]\\\notag
&=\frac1n\rtr\left[\({1+e_{n}(z)}\)\(\frac{\bS}{1+e_{n}(z)}-z\bi_p+z\bi_p\)\left(\frac{\bS}{1+e_{n}(z)}-z\bi_p\right)^{-1}\right]\\\notag
&=c_n\(1+e_{n}(z)\)+c_nz\(1+e_{n}(z)\)\frac{1}{p}\rtr\left[\left(\frac{\bS}{1+e_{n}(z)}-z\bi_p\right)^{-1}\right]\\\notag
&=\(1+e_{n}(z)\)\(c_n+c_nzm_n^0(z)\),
\end{align*}
which yields
\begin{align}\label{eqst1}
\frac{1}{1+e_{n}(z)}=1-\frac{e_{n}(z)}{1+e_{n}(z)}=1-c_n+c_nzm_n^0(z).
\end{align}
Combining (\ref{eqst0}) and (\ref{eqst1}) we obtain
$$m_n^0(z)=\frac1p\rtr\left(\bS\(1-c_n+c_nzm_n^0(z)\)-z\bi_p\right)^{-1},$$
and thus (\ref{al3}) follows.
\end{remark}

\begin{remark}
One may be concerned with whether our main theorem remains valid when $m_k$ is arbitrary or infinite. In this case, our main theorem remains valid if the random variables $x_{j,k}$ have a finite six-order moment. This issue can be achieved by some modifications in the truncation step, as shown in the following section.
\end{remark}

The remainder of the paper is organized as follows. In Section 2, some applications of this model are  presented. Section 3 concern the proof of the main theorem, and Section 4 lists some necessary lemmas.

\section{Applications of the model}
In this section, we give some applications of the introduced model.
\subsection{Application in the Multiple-Input Single-Output Channel}
Consider the downlink of a single-cell system, in which a base station
with $n$ antennas serves $p$ users, each of whom is equipped with a single antenna, and assume that $n<p$. Then, the downlink channel vector $\mb z_k$ between the base station  and the k-th user is given by \cite{Sebastian2012Large}
$$\mb z_k=\mb \Phi_k\mb \varepsilon_k, k=1,\cdots,n,$$
where $\mb \varepsilon_k$ is a standard complex random vector, and $\mb \Phi_k$ describes the channel correlation of user $k$. The analysis of the spectrum of $\mb Z\mb Z^*$, where $\mb Z=\(\mb z_1,\cdots,\mb z_n\)$, is essential in the analysis of the MISO systems, and our main theorem can be applied in this case. Since the applications in this direction have been discussed in details in \cite{Sebastian2012Large}, we omit the repetitive discussion. For further details, one may refer to the original paper of \cite{Sebastian2012Large}.

\subsection{Application in a problem of sample classification}
Classification and Cluster analysis are two important problems in multivariate statistical analysis and machine learning. The former mainly concern in identifying which of a set of categories (sub-populations) a new observation belongs to, on the basis of the knowledge of the sub-populations or a training set of data containing observations whose category membership is known. While the task of the latter is grouping a set of sample in the manner of similar (in some sense). As two examples of the more general problem of pattern recognition, classification and   Cluster find applications in various aspects of modern science and thus attract many attentions for long times, see \cite{Collins2004Introduction} for instance.

In this subsection, we consider a usual problem, which can be seen as an example of sample classification. We then proposed a method as an initial solution to this problem by applying our main theorem of this paper.
\subsubsection{Statement of the problem and proposed method} Suppose we have $n$ samples, denoted as $\{\mb y_1,\mb y_2,\cdots, \mb y_n\}$, each of which drawn from exact one of several $p$-dimensional populations denoted as $\{G_1, G_2, \cdots, G_q\}$. The task is to distinguish the affiliation of each sample. That is to say, we need to determine which population that each sample is drawn from. That is a practical problem since we may lose the affiliation of mixed samples for some reasons.

Consider the simple case where $q=2$. In most cases, the exact probability distributions of $G_1$ and $G_2$ are hard to known. Denote $\mu_1, \mu_2$ and $\Sigma_1, \Sigma_2$ as the mean vectors and population covariance matrices of the two populations $G_1$ and $G_2$. Those parameter are assumed to be known. If we further assume that the difference between $\mu_1$ and $\mu_2$ is significant enough and the two populations are both gaussian, then we shall apply the classical Distance Discrimination method. However, few literature concern about the situation when the difference between two mean vectors is negligible while the covariance matrices are different from each other. In this subsection, as a direct application of our main theorem, we consider how to classify the samples into two categories when $\mu_1=\mu_2$, $\Sigma_1\neq \Sigma_2$.
This problem can be seen as a kind of classification in some sense but not the classical one.

Note that without lose of generality, we shall assume $\mu_1=\mu_2=\mb 0$ in what follows. We also meed the assumption that $\mb S_n=\frac{1}{n}\sum_{i=1}^n\mb y_i\mb y_i'$ follows the DCM. Denote $n_1$ and $n_2$ as the true number of samples come from $G_1$ and $G_2$ respectively. Our proposed method for solving this specified problem is as follows.
First, set $z=u_0+iv_0$ with $v_0$ large (for example set $v_0=100$). Then
\begin{description}
  \item[Step 1 \ Estimate $n_1$($n_2$)]Calculate $m_n(z)=\frac{1}{p}\tr(\mb S_n-z\mb I_p)^{-1}.$ For $k=0,\cdots,n$, solve the system of equations
\begin{align*}
\begin{cases}
 \ e_{1}(k,z)=\frac1n\rtr\left[\bS_1\left(\frac{k}{n}\frac{\bS_1}{1+e_{1}(k,z)}+\frac{n-k}{n}\frac{\bS_2}{1+e_{2}(k,z)}-z\bi_p\right)^{-1}\right],\\
 \ e_{2}(k,z)=\frac1n\rtr\left[\bS_2\left(\frac{k}{n}\frac{\bS_1}{1+e_{1}(k,z)}+\frac{n-k}{n}\frac{\bS_2}{1+e_{2}(k,z)} -z\bi_p\right)^{-1}\right],\\
\end{cases}
\end{align*}
then calculate $m_n^{0}(k,z)=\frac1p\rtr\left(\frac{k}{n}\frac{\bS_1}{1+e_{1}(k,z)}+\frac{n-k}{n}\frac{\bS_2}{1+e_{2}(k,z)}-z\bi_p\right)^{-1}.$
We estimate the number $n_1$ as $${\hat n_1}=\arg \min_{k=0,\cdots,n}|m_n^{0}(k,z)-m_n(z)|.$$
  \item[Step 2 \ Classify the samples] For $l=1,\cdots,n$, let $$\mb S_{n,l}=\frac{1}{n-1}\sum_{i\neq l}\mb y_i\mb y_i'.$$ Calculate $m_{n,l}(z)=\frac{1}{p}\tr(\mb S_{n,l}-z\mb I_p)^{-1}.$
 For $k_l=0,\cdots,n-1$, Solving the  system of equations
\begin{align*}
\begin{cases}
 \ e_{1}(k_l,z)=\frac1n\rtr\left[\bS_1\left(\frac{k_l}{n-1}\frac{\bS_1}{1+e_{1}(k_l,z)}+\frac{n-1-k_l}{n-1}\frac{\bS_2}{1+e_{2}(k_l,z)}-z\bi_p\right)^{-1}\right],\\
 \ e_{2}(k_l,z)=\frac1n\rtr\left[\bS_2\left(\frac{k_l}{n-1}\frac{\bS_1}{1+e_{1}(k_l,z)}+\frac{n-1-k_l}{n-1}\frac{\bS_2}{1+e_{2}(k_l,z)} -z\bi_p\right)^{-1}\right],\\
\end{cases}
\end{align*}
then calculate $m_{n,l}^{0}(k_l,z)=\frac1p\rtr\left(\frac{k_l}{n-1}\frac{\bS_1}{1+e_{1}(k_l,z)}+\frac{n-1-k_l}{n-1}\frac{\bS_2}{1+e_{2}(k_l,z)}-z\bi_p\right)^{-1}.$

Set $${\hat n_{1,l}}=\arg \min_{k_l=0,\cdots,n-1}|m_{n,l}^{0}(k_l,z)-m_{n,l}(z)|.$$
Then
\begin{align*}
{\mbox {the sample} \ \mb y_l \mbox { is  labeled  with}}
\begin{cases}
\ 1, \ & \mbox {if} \ \hat n_{1,l} < \hat n_{1}.\\
\ 2, \ & \mbox {if} \ \hat n_{1,l}\geq \hat n_{1}.\\
\end{cases}
\end{align*}
We determine that the samples labeled with 1 are drawn from $G_1$ while the samples labeled with 2 are drawn from $G_2$.
\end{description}

\subsubsection{Simulation studies}
In this section, we conduct some simulation results to investigate the finite sample performance of our proposed method for sample classification.

For given $n_1, n_2, p$, set $n=n_1+n_2$, $\mb \Sigma_1=2\mb I_p$ and $\mb \Sigma_2=(\sigma_{2,i,j}),$ where for $1\leq i,j\leq p$, $\sigma_{2,i,j}=0.5^{|i-j|}.$
Generate $\mb X=(x_{i,j})\triangleq\(\mb x_1,\cdots,\mb x_n\)$ as a $p\times n$ matrix with its entries $x_{i,j}\sim Gamma(4,0.5)$ are independent and identically distributed. Let $\mb 1_p$ be a $p$ dimensional vector with all its entries equal 1.  Let $\mb Y=\(\mb y_1,\cdots,\mb y_n\)\triangleq\(\mb Y_1,\mb Y_2\)$ where $\mb Y_1=\mb \Sigma_1^{1/2}\(\mb x_1-2\mb 1_p,\cdots,\mb x_{n_1}-2\mb 1_p\)$ and $\mb Y_2=\mb \Sigma_2^{1/2}\(\mb x_{n_1+1}-2\mb 1_p,\cdots,\mb x_{n}-2\mb 1_p\).$ Then the data matrices $\mb Y_1$ and $\mb Y_2$ are $n_1$ and $n_2$ samples drawn from two populations $G_1\sim (\mb 0, \mb \Sigma_1)$ and $G_2\sim (\mb 0, \mb \Sigma_2)$ respectively. We use the proposed method in this subsection to discriminate to which population each sample $\mb y_i$ $(1\leq i\leq n)$ belongs.
For given pair of $p,n$ we repeat the above procedure 100 times and draw the histogram of the numbers of wrongly classified samples as well as the empirical cumulative distribution function. The empirical results for several different pairs of $p,n$ are showed in Fig. \ref{fig1}-\ref{fig6}.

The simulations showed that our proposed method have good performances. For example, when $p=200, n=400, n1=n1=200,$ there are 95 times of classification procedure over the whole 100 times result in a number of wrongly classified samples smaller than 40.

\begin{figure}[H]
  \includegraphics[width=0.7\textwidth]{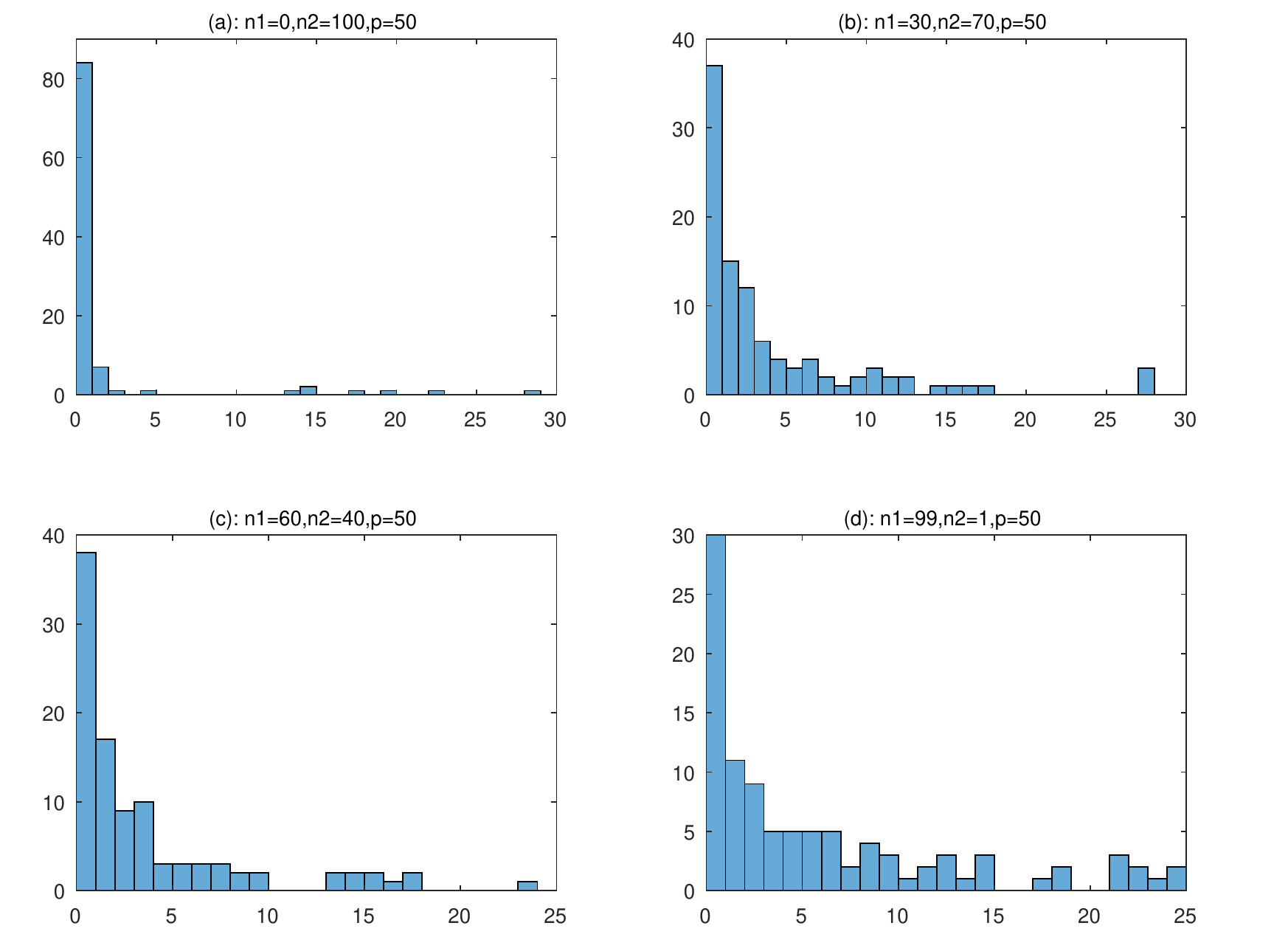}
  \caption{Histogram of the numbers of wrongly classified samples when $p=50,n=100$.}\label{fig1}	
\end{figure}

\begin{figure}[H]
  \includegraphics[width=0.7\textwidth]{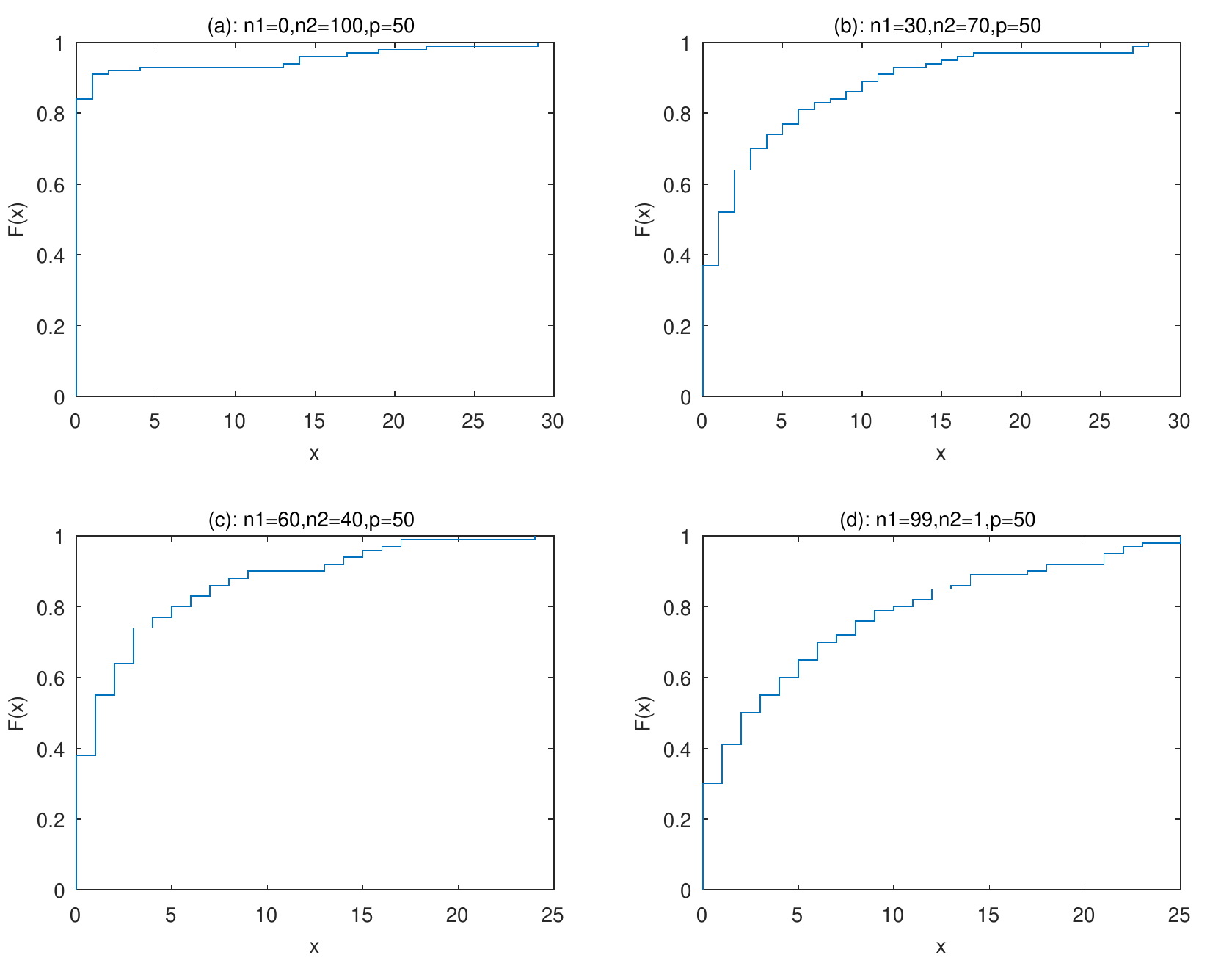}
  \caption{Empirical cumulative distribution function of the numbers of wrongly classified samples when $p=50,n=100$.}\label{fig2}	
\end{figure}

\begin{figure}[H]
  \includegraphics[width=0.7\textwidth]{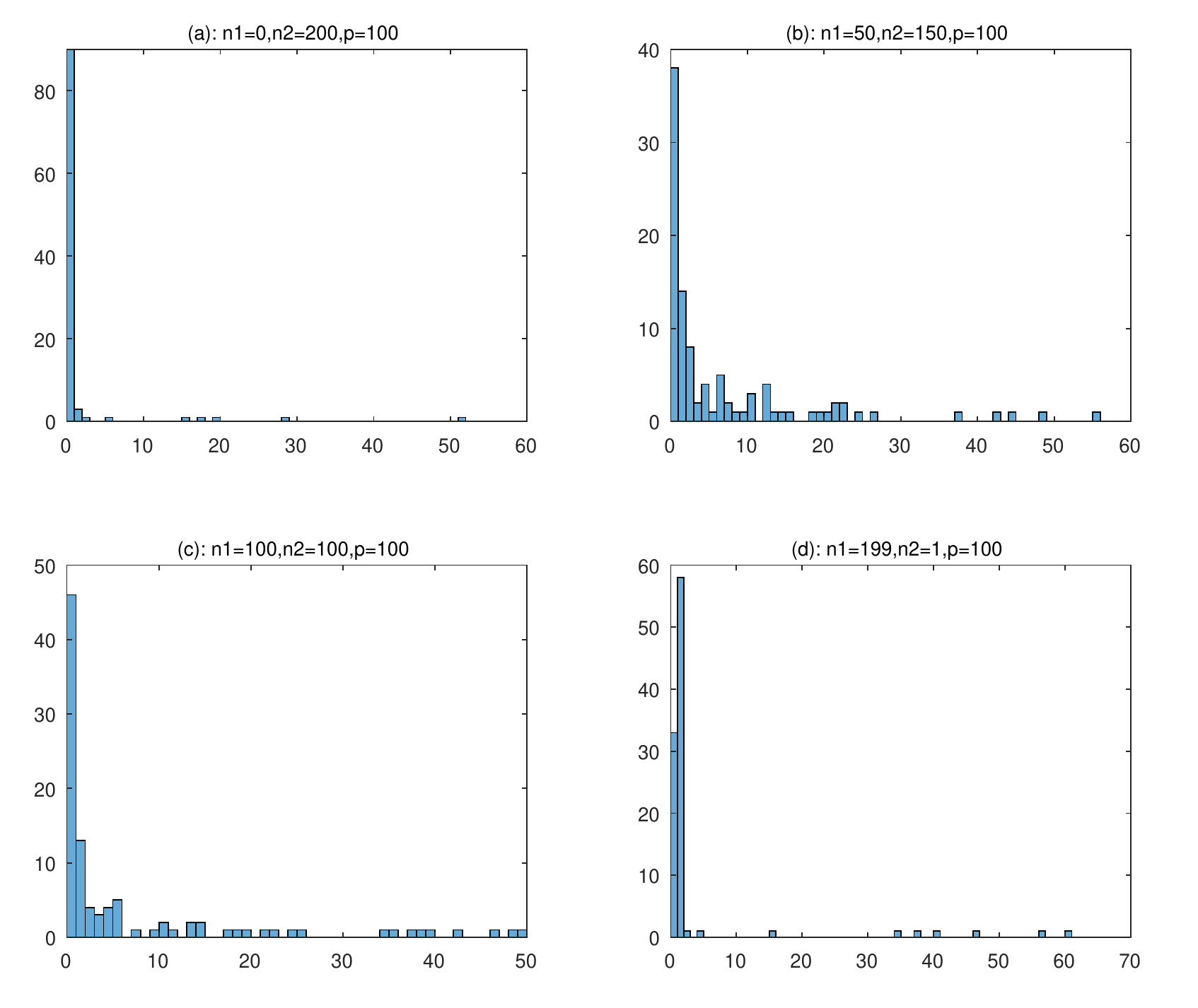}
  \caption{Histogram of the numbers of wrongly classified samples when $p=100,n=200$.}\label{fig3}	
\end{figure}

\begin{figure}[H]
  \includegraphics[width=0.7\textwidth]{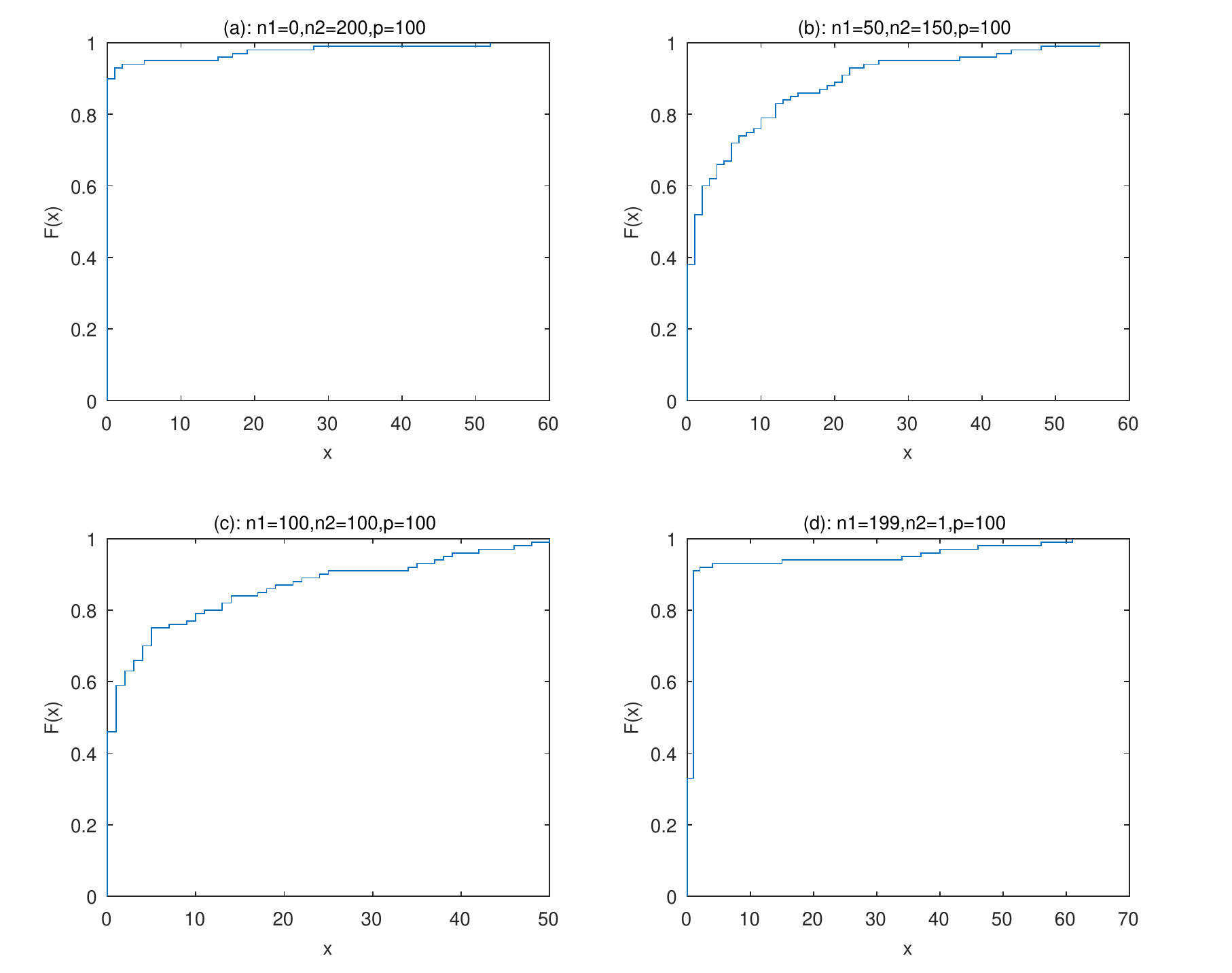}
  \caption{Empirical cumulative distribution function of the numbers of wrongly classified samples when $p=100,n=200$.}\label{fig4}	
\end{figure}

\begin{figure}[H]
  \includegraphics[width=0.7\textwidth]{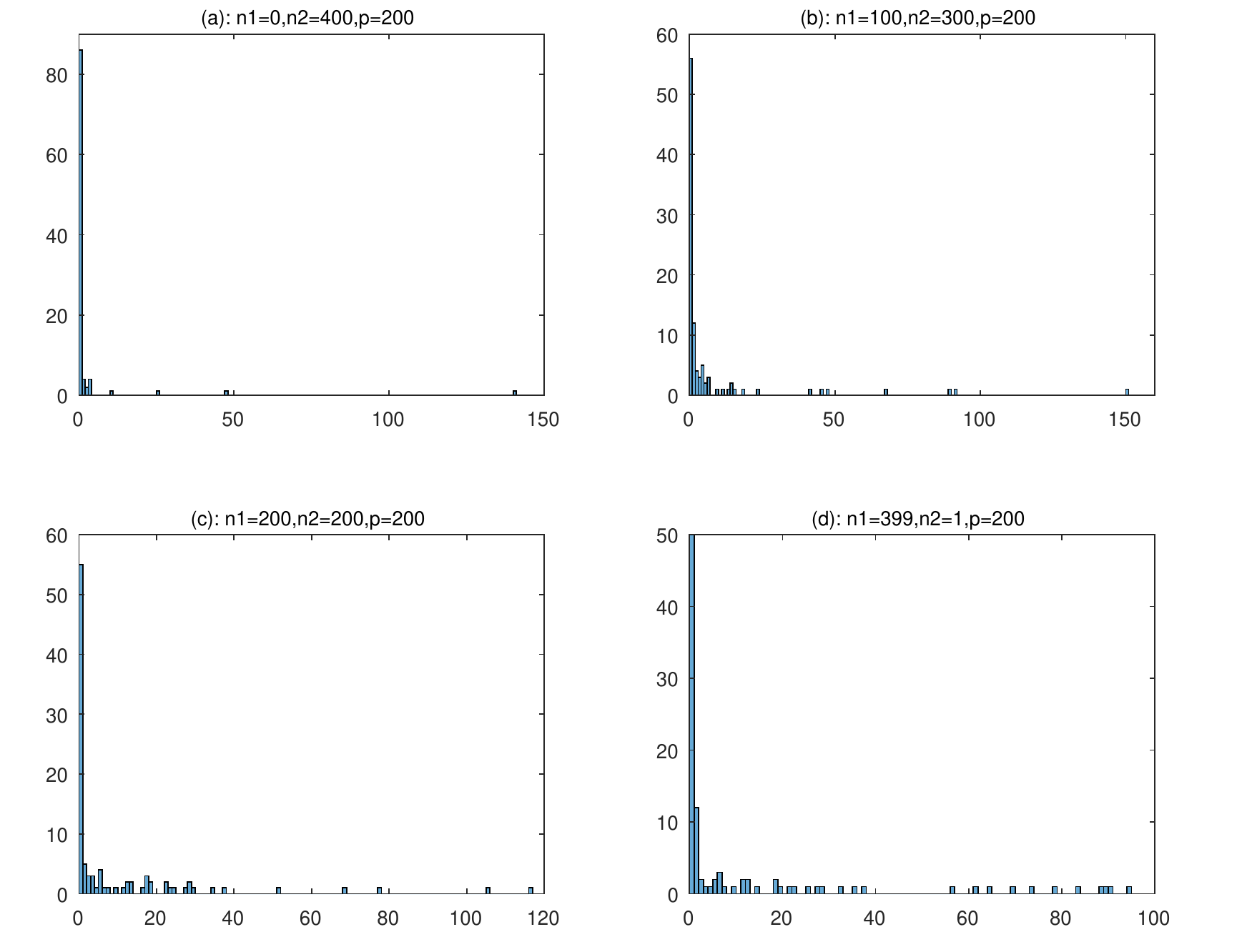}
  \caption{Histogram of the numbers of wrongly classified samples when $p=200,n=400$.}\label{fig5}	
\end{figure}

\begin{figure}[H]
  \includegraphics[width=0.7\textwidth]{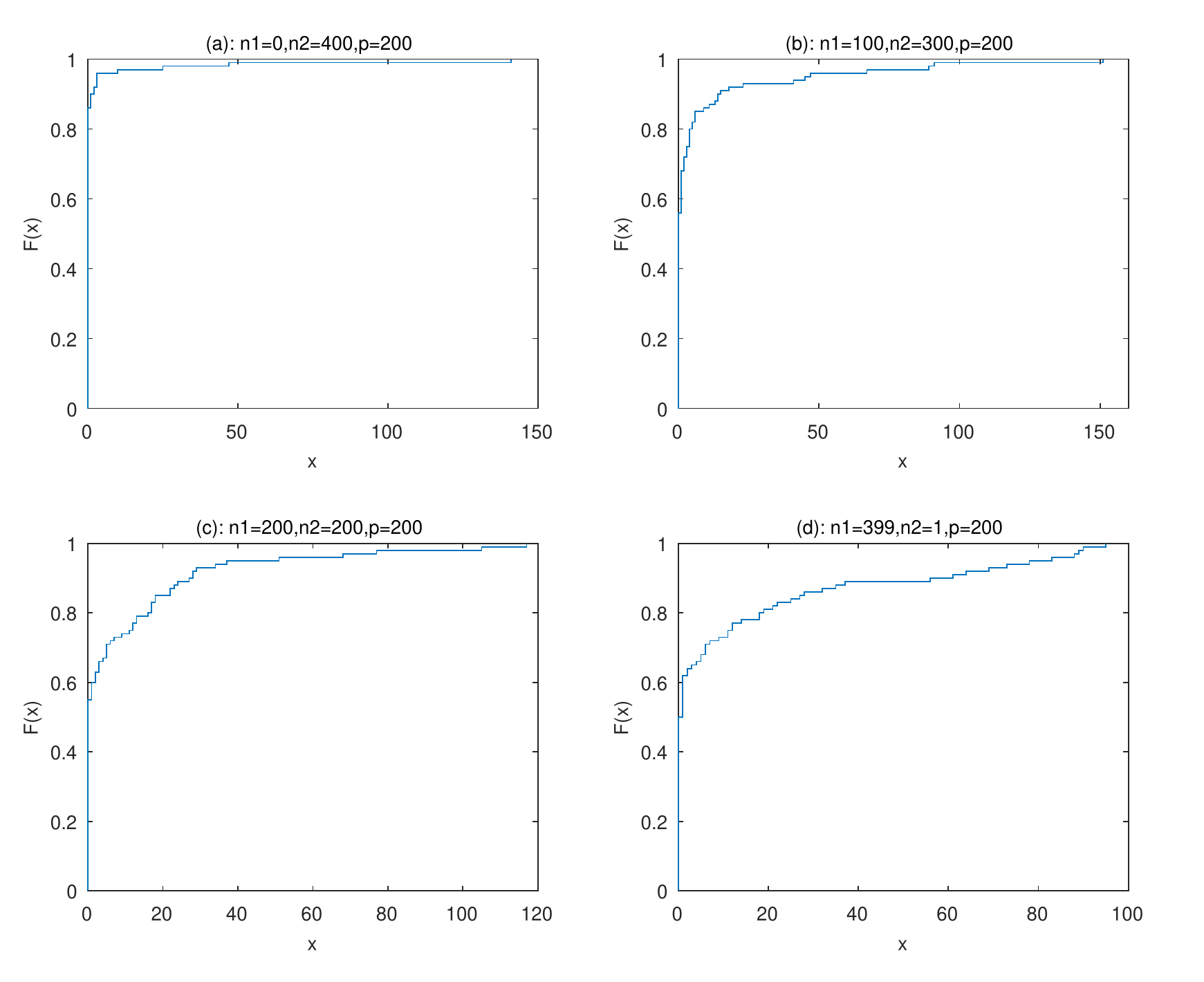}
  \caption{Empirical cumulative distribution function of the numbers of wrongly classified samples when $p=200,n=400$.}\label{fig6}	
\end{figure}

\begin{remark}
For large $n$ and $p$, the calculation cost of solving the system of equations (\ref{eqst}) could be huge. However, the calculation cost can be reduced a lot by applying the following Lemma which was proved in \cite{Sebastian2012Large}.
\begin{lemma}
Let $z=u+iv$ with $v>0$ and $\{e_{nl}^{(k)}(z)\}(k\geq 0)$ be a sequence defined by $e_{nl}^{(0)}(z)=-\frac{1}{z}$ and
 $$e_{nl}^{(k)}(z)=\frac1n\rtr\left[\bS_l\left(\frac1n\sum_{j=1}^n\frac{\bS_j}{1+e_{nl}^{(k-1)}(z)}+\ba_n-z\bi_p\right)^{-1}\right]$$
 for $k>0$. Then for any $l=1,\cdots,n$, $\lim_{k\to\infty}e_{nl}^{(k)}(z)=e_{nl}(z)$ defined in (\ref{eqst}).
\end{lemma}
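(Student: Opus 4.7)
The plan is to establish the convergence of the iterates by a Banach fixed-point argument applied to the vector iteration $\mathbf{e}^{(k)}(z)=(e_{n1}^{(k)}(z),\ldots,e_{nn}^{(k)}(z))$. View the iteration as a self-map $\Phi(\cdot\,;z):\mathbb{C}^n\to\mathbb{C}^n$ with components
$$\Phi_l(e_1,\ldots,e_n;z)=\frac{1}{n}\rtr\left[\bS_l\left(\frac{1}{n}\sum_{j=1}^n\frac{\bS_j}{1+e_j}+\ba_n-z\bi_p\right)^{-1}\right],$$
so that one step of the lemma's iteration reads $\mathbf{e}^{(k)}(z)=\Phi(\mathbf{e}^{(k-1)}(z);z)$ and the desired limit is the unique fixed point of $\Phi$, which is precisely the solution of (\ref{eqst}).

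First I establish invariance of a suitable domain. Starting from $e_{nl}^{(0)}(z)=-1/z\in\mathbb{C}^+$ for $z\in\mathbb{C}^+$, I show by induction on $k$ that every iterate $e_{nl}^{(k)}(z)$ lies in $\mathbb{C}^+$ and satisfies $|e_{nl}^{(k)}(z)|\le K/v$, where $K=\sup_{n,l}\|\bS_l\|<\infty$ by Definition~\ref{def}(c). The key observation is that whenever $e_j\in\mathbb{C}^+$, $\mathrm{Im}(1/(1+e_j))=-\mathrm{Im}(e_j)/|1+e_j|^2<0$; hence the matrix $M=\frac{1}{n}\sum_j\frac{\bS_j}{1+e_j}+\ba_n-z\bi_p$ has imaginary part $\preceq -v\bi_p$ in the Loewner order. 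Consequently $\|M^{-1}\|\le 1/v$ and $\mathrm{Im}(M^{-1})$ is positive semidefinite, so taking the trace against the PSD matrix $\bS_l$ places $\Phi_l$ in $\mathbb{C}^+$ with $|\Phi_l|\le\|\bS_l\|/v\le K/v$.

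Second, I derive a Lipschitz estimate. A direct application of the resolvent identity to $\mathbf{R}(\mathbf{e})=M^{-1}$ yields
$$\Phi_l(\mathbf{e})-\Phi_l(\mathbf{e}')=\frac{1}{n^2}\sum_{j=1}^n\frac{e_j-e_j'}{(1+e_j)(1+e_j')}\,\rtr\left[\bS_l\mathbf{R}(\mathbf{e})\bS_j\mathbf{R}(\mathbf{e}')\right].$$
Using $\|\mathbf{R}\|\le 1/v$, the uniform norm bound $\|\bS_j\|\le K$, the trace estimate $|\rtr[\bS_l\mathbf{R}\bS_j\mathbf{R}']|\le pK^2/v^2$, and the lower bound $|1+e_j|\ge 1-K/v\ge 1/2$ valid once $v\ge 2K$, I obtain
$$\|\Phi(\mathbf{e})-\Phi(\mathbf{e}')\|_\infty\le\frac{4pK^2}{nv^2}\,\|\mathbf{e}-\mathbf{e}'\|_\infty.$$
Since $p/n\to c\in(0,\infty)$, for $v$ sufficiently large (uniformly in $n$) this is a strict contraction; Banach's fixed-point theorem on the complete invariant set $\{\mathbf{e}\in(\mathbb{C}^+)^n:|e_l|\le K/v\}$ then gives $\mathbf{e}^{(k)}(z)\to\mathbf{e}(z)$ on this large-$v$ region.

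Finally, to extend the conclusion to every $z\in\mathbb{C}^+$, I invoke a normal-family argument: each $e_{nl}^{(k)}(z)$ is analytic on $\mathbb{C}^+$ and, by the invariance step, uniformly bounded on compact subsets, so $\{e_{nl}^{(k)}\}_k$ is normal. Any subsequential limit is analytic and a fixed point of $\Phi(\cdot\,;z)$; by the uniqueness asserted in Theorem~\ref{thd1} it must equal $e_{nl}(z)$, and Vitali's theorem then upgrades this to locally uniform convergence on all of $\mathbb{C}^+$. The main obstacle is securing a contraction constant strictly below one that is uniform in $n$, which the straightforward resolvent bounds only furnish for large $\mathrm{Im}(z)$; propagating to arbitrary $z\in\mathbb{C}^+$ therefore has to route through analyticity and the uniqueness of the fixed point rather than through a global contraction.
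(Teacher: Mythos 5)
The paper itself offers no proof of this lemma --- it is stated with a citation to \cite{Sebastian2012Large} --- so your argument has to stand on its own, and in outline it does: the invariance of the closed set $\{\mathrm{Im}\,e_l\ge 0\}$ (note $-1/z\in\mathbb{C}^+$), the bound $\|M^{-1}\|\le 1/\Im z$ from $\mathrm{Im}\,M\preceq-\Im z\,\bi_p$, the positivity of $\mathrm{Im}(M^{-1})$, and the Lipschitz estimate via the resolvent identity are all correct, and the resulting contraction is indeed uniform in $n$ once $\Im z$ exceeds a fixed $v_0$. This is a genuinely different, more self-contained route than the cited one. Two small repairs are needed in the quantitative part: the invariance bound should read $|\Phi_l|\le\tr(\bS_l)/(n\Im z)\le c_{nl}K/\Im z$ (or $c_nK/\Im z$), not $\|\bS_l\|/\Im z$ --- harmless, since $p/n$ and $m_l/n$ are bounded by Definition~\ref{def}(e), but the constant propagates into the threshold on $v$ --- and to identify the Banach fixed point with the solution of (\ref{eqst}) you should check that $e_{nl}(z)$ itself lies in your invariant ball for $\Im z\ge v_0$, which is immediate from $|e_{nl}(z)|\le\tr(\bS_l)/(n\Im z)$ and $\mathrm{Im}\,e_{nl}(z)\ge0$ (the Stieltjes-transform property asserted in Theorem~\ref{thd1}).

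The genuine soft spot is your final paragraph. A locally uniform subsequential limit of an iteration is not automatically a fixed point of $\Phi$: if $e^{(k_m)}\to f$, continuity gives $e^{(k_m+1)}\to\Phi(f)$, but $e^{(k_m+1)}$ is a different subsequence whose limit need not be $f$. Moreover, the uniqueness in Theorem~\ref{thd1} is uniqueness within the class of families $\{e_{nk}(\cdot)\}$ that are Stieltjes transforms of nonnegative measures, not pointwise uniqueness of solutions of the system at a fixed $z$ with small imaginary part, so invoking it for a single subsequential limit at fixed $z$ is not justified as written. Neither claim is needed, though: since the full sequence $e_{nl}^{(k)}(z)$ converges to $e_{nl}(z)$ on $\{\Im z>v_0\}$, a set with limit points in $\mathbb{C}^+$, and the iterates are analytic and locally uniformly bounded on $\mathbb{C}^+$ (your invariance step), the paper's Vitali lemma (Lemma~\ref{led3}) gives convergence of the whole sequence on all of $\mathbb{C}^+$ to an analytic limit; that limit agrees with $e_{nl}$ on $\{\Im z>v_0\}$ and, since $e_{nl}$ is analytic on $\mathbb{C}^+$ as a Stieltjes transform, coincides with it everywhere by the identity theorem. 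Rewrite the last step along these lines (and fix the two constants above) and the proof is complete.
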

\end{remark}



\section{Proof of the main theorem}

This section shows the proof of our main theorem. To relax the condition on the moment of random variables, we must truncate the variables at a proper order by obtaining a bound on the spectral norm of $\bs_n$. In turn, the truncation is achieved by applying the non-asymptotic analysis of random matrices. Henceforth, $C$ denotes a constant that may take different values from one appearance to another.

\subsection{A bound on the spectral norm of $\bs_n$}
This part aims to give a   bound on the largest eigenvalue of $\bs_n$. We need the following lemma, which is a modification of Theorem 5.44 in \cite{ver2010}.
\begin{lemma}\label{vsn}
Let $\bf A$ be an $N\times M$ matrix, whose rows $A_i$ are independent zero-mean random row vectors in $\mathbb{C}^M$ with covariance matrices $\mb \Sigma_i=\rm E A_i^*A_i$. Let $l$ be a number such that $\sqrt{A_iA_i^*}\leq \sqrt{l}$ and $\|\mb \Sigma_i\|\leq C\leq l$ almost surely for all $i$. Then, for every $t>0$, the following inequality holds with a probability of at least $1-2M\exp^{-ct^2/2}$:
\begin{align}
  \|{\bf A}\|\leq {C}\sqrt{N}+ t \sqrt{l},
\end{align}
where $c$ is a constant.
\end{lemma}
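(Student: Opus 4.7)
My plan is to reduce the statement to the non-commutative (matrix) Bernstein inequality for sums of independent Hermitian matrices, along the lines of Vershynin's original argument but without assuming the rows are identically distributed. Since $\|\mb A\|^2 = \|\mb A^*\mb A\|$, I would write
\begin{equation*}
\mb A^*\mb A \;=\; \sum_{i=1}^N A_i^* A_i \;=\; \sum_{i=1}^N \mb \Sigma_i \;+\; \sum_{i=1}^N Y_i, \qquad Y_i := A_i^*A_i-\mb \Sigma_i,
\end{equation*}
where the $Y_i$ are independent, centered, Hermitian $M\times M$ random matrices. The deterministic part is controlled by $\bigl\|\sum_i \mb \Sigma_i\bigr\|\le N\max_i\|\mb \Sigma_i\|\le NC$, so after using $\sqrt{a+b}\le\sqrt a+\sqrt b$ it will suffice to give a sharp high-probability bound on $\|\sum_i Y_i\|$.

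Next I would verify the two standard inputs to matrix Bernstein. For the uniform bound, $\|Y_i\|\le\|A_i^*A_i\|+\|\mb \Sigma_i\|= A_iA_i^*+\|\mb \Sigma_i\|\le 2l$ almost surely, using both hypotheses together with $C\le l$. For the matrix variance, since $A_iA_i^*$ is a nonnegative scalar bounded by $l$,
\begin{equation*}
\E Y_i^2 \;\preceq\; \E(A_i^*A_i)^2 \;=\; \E\bigl[(A_iA_i^*)\,A_i^*A_i\bigr] \;\preceq\; l\,\mb \Sigma_i,
\end{equation*}
hence $\bigl\|\sum_i \E Y_i^2\bigr\|\le l\bigl\|\sum_i\mb \Sigma_i\bigr\|\le lNC$. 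The matrix Bernstein inequality then delivers, for every $s>0$,
\begin{equation*}
\Pr\Bigl[\,\bigl\|\textstyle\sum_i Y_i\bigr\|>s\,\Bigr] \;\le\; 2M\exp\!\left(-\frac{s^2/2}{lNC+2ls/3}\right).
\end{equation*}

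Finally, choosing $s := 2Ct\sqrt{Nl}+t^2 l$ makes both the quadratic (small-$s$) and linear (large-$s$) branches of the Bernstein denominator collapse to an exponent of order $t^2$, so the right-hand side becomes at most $2M\exp(-ct^2/2)$ for some absolute constant $c>0$. On the corresponding good event,
\begin{equation*}
\|\mb A^*\mb A\| \;\le\; NC + 2Ct\sqrt{Nl}+t^2l \;\le\; \bigl(C\sqrt{N}+t\sqrt{l}\bigr)^2,
\end{equation*}
after absorbing $\sqrt{C}$ into the generic constant $C$, and taking square roots yields the claimed inequality.

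The main obstacle I expect is not a single hard estimate but a bookkeeping one: Vershynin's Theorem~5.44 is stated under a common-covariance hypothesis and bounds $\|\frac1N\mb A^*\mb A-\mb\Sigma\|$, so one must rerun the argument without identical distribution (since matrix Bernstein only requires independence, this step is essentially cosmetic) and then repackage the resulting concentration of $\|\mb A^*\mb A\|$ as a bound on the operator norm of $\mb A$ in the exact form stated, carrying the generic constant $C$ cleanly through the mean term, the cross term, and the square-root step.
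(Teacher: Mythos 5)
Your proposal follows essentially the same route as the paper: decompose $\mb A^*\mb A$ (the paper works with $\tfrac1N\mb A^*\mb A$) into $\sum_i\mb\Sigma_i$ plus the independent centered Hermitian matrices $A_i^*A_i-\mb\Sigma_i$, verify the uniform bound ($\le 2l$, up to the $1/N$ normalization) and the variance bound ($\lesssim lNC$), and apply Tropp's non-commutative Bernstein inequality with a deviation level chosen so that both branches give an exponent of order $t^2$, yielding the probability $1-2M\exp(-ct^2/2)$. Your checks of the variance proxy and the choice $s=2Ct\sqrt{Nl}+t^2l$ match the paper's $\zeta=\max\bigl(t\sqrt{l/N},t^2l/N\bigr)$ up to constants, and your explicit final square-root/constant-absorption step only makes precise what the paper leaves implicit, so the argument is correct and not materially different.
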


The proof of Lemma \ref{vsn} depends on the following lemma in \cite{Joel2012User}.
\begin{lemma}[Non-commutative Bernstein-type inequality]\label{bti}
Consider a finite-sequence ${\mb \Psi_i}$ of independent centered self-adjoint random $M\times M$ matrices. Assume that for some numbers $K$ and $\sigma$, we have
$$\|\mb \Psi_i\|\leq k \ {\rm almost \ surely},\quad \|\sum_i\E \mb \Psi_i^2\|\leq \sigma^2.$$
Then, for every $t\geq 0$. we have
$${\rm P}\(\|\sum_i\ \mb \Psi_i\|\geq t\)\leq 2M\exp\(\frac{-t^2/2}{\sigma^2+Kt/3}\).$$
\end{lemma}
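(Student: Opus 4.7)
The plan is to adapt the classical Bernstein argument to the matrix setting via the matrix Laplace transform method. Let $\mb S=\sum_i\mb \Psi_i$. First, since $\|\mb S\|=\max\{\lambda_{\max}(\mb S),\lambda_{\max}(-\mb S)\}$, a union bound gives
\[
{\rm P}(\|\mb S\|\geq t)\leq {\rm P}(\lambda_{\max}(\mb S)\geq t)+{\rm P}(\lambda_{\max}(-\mb S)\geq t),
\]
and since $\{-\mb \Psi_i\}$ satisfies the same hypotheses as $\{\mb \Psi_i\}$, it suffices to produce a one-sided bound with prefactor $M$; the stated factor $2M$ is then recovered by summing the two symmetric tails.

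For the one-sided bound, I would apply the exponential Markov inequality in trace form: for any $\theta>0$, using $e^{\theta\lambda_{\max}(\mb S)}=\lambda_{\max}(e^{\theta\mb S})\leq\tr\,e^{\theta\mb S}$,
\[
{\rm P}(\lambda_{\max}(\mb S)\geq t)\leq e^{-\theta t}\,\E\,\tr\,e^{\theta\mb S}.
\]
The crucial step is to subadditively decouple $\E\,\tr\,e^{\theta\mb S}$ into individual matrix cumulants. Golden--Thompson is insufficient for three or more summands, but Lieb's concavity theorem---that $\mb A\mapsto\tr\exp(\mb H+\log\mb A)$ is concave on the positive-definite cone for any fixed Hermitian $\mb H$---combined with an inductive conditioning argument (Ahlswede--Winter / Tropp) yields
\[
\E\,\tr\,e^{\theta\mb S}\leq\tr\exp\(\sum_i\log\E\,e^{\theta\mb \Psi_i}\).
\]
One peels off summands one at a time: condition on all but $\mb \Psi_j$, regard the remaining exponent as a fixed Hermitian $\mb H_j$, and use Lieb's theorem together with conditional Jensen to push the expectation through the trace exponential.

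The scalar input is a Bernstein-type bound on the individual matrix MGF. For a centered Hermitian $\mb \Psi$ with $\|\mb \Psi\|\leq K$, the spectral inequality $\mb \Psi^k\preceq K^{k-2}\mb \Psi^2$ for $k\geq 2$ allows a Taylor expansion to be controlled termwise in the Loewner order:
\[
\E\,e^{\theta\mb \Psi}\preceq\bi+\sum_{k\geq 2}\frac{\theta^k K^{k-2}}{k!}\,\E\,\mb \Psi^2\preceq\exp\(g(\theta)\,\E\,\mb \Psi^2\),
\]
where $g(\theta)=(\theta^2/2)/(1-K\theta/3)$ for $0<\theta<3/K$, by summing a geometric series and using $1+x\leq e^x$. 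Operator monotonicity of the logarithm gives $\log\E\,e^{\theta\mb \Psi_i}\preceq g(\theta)\,\E\,\mb \Psi_i^2$, so Loewner monotonicity of $\mb A\mapsto\tr\,e^{\mb A}$, together with the hypothesis $\|\sum_i\E\,\mb \Psi_i^2\|\leq\sigma^2$, yields $\E\,\tr\,e^{\theta\mb S}\leq M\exp(g(\theta)\sigma^2)$.

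Combining the three ingredients, the one-sided tail is at most $M\exp(g(\theta)\sigma^2-\theta t)$. The last step is to optimize over $\theta\in(0,3/K)$: the choice $\theta=t/(\sigma^2+Kt/3)$ produces exactly the exponent $-(t^2/2)/(\sigma^2+Kt/3)$, and doubling for the symmetric tail gives the stated $2M$ prefactor. I expect the main obstacle to be the matrix cumulant-subadditivity step: scalar MGFs factor freely under independence, but $\E e^{\theta(\mb X+\mb Y)}\neq\E e^{\theta\mb X}\E e^{\theta\mb Y}$ when $\mb X,\mb Y$ do not commute, so Lieb's concavity theorem is the indispensable deep input. Once it is invoked, everything else is the scalar Bernstein calculation promoted to the Loewner order, which is routine.
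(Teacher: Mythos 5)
Your proposal is correct, but note that the paper itself gives no proof of this lemma: it is imported verbatim (as Lemma~\ref{bti}) from the cited reference \cite{Joel2012User}, and your argument is essentially a faithful reconstruction of the proof given there --- the matrix Laplace transform bound ${\rm P}(\lambda_{\max}(\sum_i\mb\Psi_i)\ge t)\le e^{-\theta t}\,\E\,\tr\,e^{\theta\sum_i\mb\Psi_i}$, subadditivity of matrix cumulants via Lieb's concavity theorem with the conditioning/peeling induction, the Loewner-order Taylor estimate $\mb\Psi^k\preceq K^{k-2}\mb\Psi^2$ leading to $\E e^{\theta\mb\Psi_i}\preceq\exp\bigl(g(\theta)\,\E\mb\Psi_i^2\bigr)$ with $g(\theta)=(\theta^2/2)/(1-K\theta/3)$, and the choice $\theta=t/(\sigma^2+Kt/3)$, which indeed yields the exponent $-(t^2/2)/(\sigma^2+Kt/3)$, with the factor $2M$ coming from the union bound over the two tails. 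The individual steps all check out (the geometric-series bound uses $k!\ge 2\cdot 3^{k-2}$, the logarithm's operator monotonicity and the monotonicity of $\mb A\mapsto\tr e^{\mb A}$ are applied correctly), so the only caveat is terminological: the Ahlswede--Winter route is the Golden--Thompson iteration giving the weaker parameter $\sum_i\|\E\mb\Psi_i^2\|$, whereas it is Tropp's Lieb-based argument that delivers $\|\sum_i\E\mb\Psi_i^2\|$ as stated; your proof uses the latter, which is what the lemma requires.
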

\begin{proof}[Proof of Lemma \ref{vsn}]
First, write $$\frac{1}{N}\mb A^*\mb A-\frac{1}{N}\sum_{i=1}^N\mb \Sigma_i =\frac{1}{N}\sum_{i=1}^N \(A_i^*A_i-\mb \Sigma_i\)\triangleq \sum_{i=1}^N\mb \Psi_i.$$
It is easy to verify that for any $i$,
$$\E\(\mb \Psi_i\)=\E\frac{1}{N}\(A_i^*A_i-\mb \Sigma_i\)=\mb 0,$$
and
$$\|\mb \Psi_i\|\leq \frac{1}{N}\(\|A_i^*A_i\|+\|\mb \Sigma_i\|\)\leq \frac{l+\|\mb \Sigma_i\|}{N}\leq \frac{2l}{N}.$$
Here, we use $\|A_i^*A_i\|={A_iA_i^*}\leq l$.
In addition, we have
$$\mb \Psi_i^2=\frac{1}{N^2}\(\(A_i^*A_i\)^2-2\(A_i^*A_i\)\mb \Sigma_i+\mb \Sigma_i^2\),$$
which implies
$$\E\mb \Psi_i^2=\frac{1}{N^2}\(\E\(A_i^*A_i\)^2-\mb \Sigma_i^2\).$$
Thus, we have $$\|\E\mb \Psi_i^2\|\leq \frac{1}{N^2}\|\mb \Sigma_i\|\(l+\|\mb \Sigma_i\|\)$$
by noting that $\(A_i^*A_i\)^2=A_iA_i^*\(A_i^*A_i\).$
Then, we arrive at $$\|\sum_{i=1}^N\E\mb \Psi_i^2\|\leq \frac{1}{N^2}\sum_{i=1}^N\|\mb \Sigma_i\|\(l+\|\mb \Sigma_i\|\)=\frac{1}{N}\sum_{i=1}^N\|\mb \Sigma_i\|\(\frac{1}{N}\(l+\|\mb \Sigma_i\|\)\)\leq \frac{2Cl}{N}.$$
Now, denoting $\zeta=\max\(t\sqrt{l/N},t^2l/N\)$ and applying Lemma \ref{bti}, we obtain
\begin{align}
&{\rm P}\(\|\frac{1}{N}\mb A^*\mb A-\frac{1}{N}\sum_{i=1}^N\mb \Sigma_i\|\geq \zeta\)\\\notag
&\leq 2M\exp\(-c\min\(\frac{\zeta}{\frac{2l}{N}},\frac{\zeta^2}{\frac{2l}{N}}\)\)
=2M\exp\(-ct^2\frac{l}{N}{\frac{N }{2l}}\)=2M\exp\(-\frac{c t^2}{2}\).
\end{align}
Here, we use the simple fact that $\min\(\(\max\(x,x^2\)\),\(\max\(x,x^2\)\)^2\)=x^2$.
This completes the proof of this lemma.
\end{proof}

Let $A_i=\frac{1}{\sqrt{n}}\mb x_i^*\mb B_i^*$, $N=n$, and $M=p$. We have $\|\mb \Sigma_i\|^{1/2}=\|\rm E A_iA_i'\|^{1/2}\leq C$. Then, applying Lemma \ref{led2}, for any $i$, we have
\begin{align*}
{\rm P}({|\mb x_i^*{\bf B_n^*}{\bf B_n}\mb x_i-\rtr{\bf B_n}{\bf B_n^*}|>n^{5/3}})\leq \frac{{\rm E}|\mb x_i^*{\bf B_n^*}{\bf B_n}\mb x_i-\rtr{\bf B_n}{\bf B_n^*}|^2}{n^{10/3}}\leq \frac{Cn}{n^{10/3}}=\frac{C}{n^{7/3}}.
\end{align*}
Thus, we obtain $${\rm P}(\sup_{i}{A_iA_i'>Cn^{5/3}})\leq \frac{Cn}{n^{7/3}}=\frac{C}{n^{4/3}},$$ which is summable.

Then, letting $t=C\log n$ and $l=Cn^{5/3}$, based on Lemma \ref{vsn}, for any $s$ and all large $n$, we arrive at
\begin{align}\label{al11}
{\rm P}(\|{\bf S_n}\|>n^{6/7})=o(n^{-s}).
\end{align}

This implies
\begin{align}\label{max}
  \|\bs_n\|\le n^{6/7}\quad a.s.
\end{align}

\subsection{Truncation and recentralization}
In this subsection, we truncate the variables in the data matrix at a proper position. The truncation step is an important tool in relaxing moment conditions in various random matrix models. The use of this technique can be found in many papers, see for instance, \cite{Yin1988} and \cite{Karoui2009}.

Suppose that the assumptions of Theorem \ref{thd1} hold. Since $\re|x_{11}|^{4}<\infty$, for any $\tau>0$, we have
\begin{align*}
\sum_{k=1}^{\infty}\tau^{-2}2^{2k}{\rm P}\left(|x_{11}|\ge\tau2^{k/2}\right)<\infty.
\end{align*}
Then, we can select a slowly decreasing sequence of constants $\tau_n\to0$ such that
\begin{align*}
\sum_{k=1}^{\infty}\tau_{2^k}^{-2}2^{2k}{\rm P}\left(|x_{11}|\ge\tau_{2^k}2^{k/2}\right)<\infty.
\end{align*}
We truncate the variables $x_{jk}$ at $\tau_n\sqrt n$ and denote the resulting variables by $\hat x_{jk}$, i.e., $\hat {x}_{jk}=x_{jk}I\left(|x_{jk}|\leq \tau_n\sqrt n\right)$. We also denote
$$\hat\bx_k=\left(\hat {x}_{1k},\cdots,\hat x_{m_kk}\right)'~~\mbox{and}~~ \ \widehat\bs_n=\frac{1}{n}\sum_{k=1}^n\bb_k\hat\bx_k\hat\bx_k^*\bb_k^*$$
and obtain
\begin{align*}
&P(\bs_n\neq \widehat\bs_n,i.o.)=\lim_{N\to\infty}{\rm P}\left(\bigcup_{n=2^N}^{\infty}\bigcup_{k=1}^n\{\bx_k\neq \hat\bx_k\}\right)\\
\le&\lim_{N\to\infty}\sum_{l=N}^{\infty}{\rm P}\left(\bigcup_{2^l<n\le2^{l+1}}\bigcup_{k=1}^n\bigcup_{j=1}^{m_k}\{x_{jk}\neq \hat x_{jk}\}\right)\\
\le&\lim_{N\to\infty}\sum_{l=N}^{\infty}{\rm P}\left(\bigcup_{2^l<n\le2^{l+1}}\bigcup_{k=1}^{2^{l+1}}\bigcup_{j=1}^{2M2^{l+1}}\{|x_{jk}|\ge\tau_{2^{l}}2^{l/2}\}\right)\\
=&\lim_{N\to\infty}\sum_{l=N}^{\infty}{\rm P}\left(\bigcup_{k=1}^{2^{l+1}}\bigcup_{j=1}^{2M2^{l+1}}\{|x_{jk}|\ge\tau_{2^l}2^{l/2}\}\right)\\
\le&8M\lim_{N\to\infty}\sum_{l=N}^{\infty}2^{2l}{\rm P}\left(|x_{11}|\ge\tau_{2^l}2^{l/2}\right)\to0.
\end{align*}

Let $\widetilde\bs_n=\frac{1}{n}\sum_{k=1}^n\bb_k\left(\hat\bx_k-\re\hat\bx_k\right)\left(\hat\bx_k-\re\hat\bx_k\right)^*\bb_k^*$. It is obvious that
\begin{align*}
|\re{\hat x}_{jk}|\le\re\left|x_{jk}I(|x_{jk}|>\tau_n\sqrt n)\right|\le\frac{\mu}{\tau_n^3n^{3/2}}.
\end{align*}
Using Lemma \ref{led1}, we obtain
\begin{align*}
&\max_j|\lambda_j(\widehat\bs_n)-\lambda_j(\widetilde\bs_n)|\le\|\widehat\bs_n-\widetilde\bs_n\|\\
\le&2\left|\frac1n\sum_{k=1}^n(\re\hat\bx_k)^*\bb_k^*\bb_k\hat\bx_k\right|+\left|\frac1n\sum_{k=1}^n(\re\hat\bx_k)^*\bb_k^*\bb_k(\re\hat\bx_k)\right|\\
\le&\frac{2C}n\sum_{k=1}^n|(\re\hat\bx_k)^*\hat\bx_k|+\frac Cn\sum_{k=1}^n|(\re\hat\bx_k)^*(\re\hat\bx_k)|\\
\le&\frac{2C}n\sum_{k=1}^n\sum_{j=1}^{m_k}|\hat x_{jk}\re\overline{\hat x}_{jk}|+\frac Cn\sum_{k=1}^n\sum_{j=1}^{m_k}|\re{\hat x}_{jk}|^2\\
\le&\frac{2C}n\sum_{k=1}^n\sum_{j=1}^{m_k}|\hat x_{jk}\re\overline{\hat x}_{jk}-|\re{\hat x}_{jk}|^2|+\frac {2C}n\sum_{k=1}^n\sum_{j=1}^{m_k}|\re{\hat x}_{jk}|^2\\
\le&\frac{2C}n\sum_{k=1}^n\sum_{j=1}^{m_k}|\hat x_{jk}\re\overline{\hat x}_{jk}-|\re{\hat x}_{jk}|^2|+\frac {CM\mu^2}{\tau_n^6n^{2}}\xrightarrow{a.s.}0.
\end{align*}
Here, we use the fact that for any $\varepsilon>0$,
\begin{align*}
&{\rm P}\left(n|\hat x_{jk}\re\overline{\hat x}_{jk}-|\re{\hat x}_{jk}|^2|>\varepsilon\right)\le\frac{2n^4\re|\hat x_{jk}\re\overline{\hat x}_{jk}|^4}{\varepsilon^4}\le\frac{2n^4\mu|\re{\hat x}_{jk}|^4}{\varepsilon^4}\le\frac{2\mu}{\varepsilon^4\tau_n^{12}n^2},
\end{align*}
which is summable.

Let $\sigma^2=\re|\hat x_{11}-\re\hat x_{11}|^2$; then, we have
\begin{align*}
 1-\sigma^2=&\E \left|x_{11}\right|^2\( I\left(|x_{11}|>\tau_n\sqrt n\right)+I\left(|x_{11}|\leq \tau_n\sqrt n\right)\)-\re|\hat x_{11}|^2+\(\re\hat x_{11}\)^2\\\notag
=&\re\left|x_{11}\right|^2 I\left(|x_{11}|>\tau_n\sqrt n\right)+\(\re\hat x_{11}\)^2\\\notag
\le&2\re\left|x_{11}\right|^2 I\left(|x_{11}|>\tau_n\sqrt n\right)\le\frac{2\mu}{\tau_n^2n}.
\end{align*}
From Lemma \ref{led1} and (\ref{max}), this yields
\begin{align*}
\max_j|\lambda_j(\widetilde\bs_n)-\lambda_j(\sigma^{-2}\widetilde\bs_n)|\le\frac{1-\sigma^2}{\sigma^2}\|\widetilde\bs_n\|
\le\frac{2 C\mu}{\tau_n^2n}\|\widetilde\bs_n\|\xrightarrow{a.s.}0.
\end{align*}

For simplicity, the truncated and recentralized variables are still denoted by $x_{jk}$. We assume the following:
\begin{itemize}
\item[(1)] The variables $\{x_{jk},j=1,2,\cdots,m;k=1,2,\cdots,n\}$ are independent.
\item[(2)] $\re(x_{jk})=0$ and ${\rm Var}(x_{jk})=1$.
\item[(3)] $|x_{jk}|\le \tau_n\sqrt n$.
\item[(4)] $\re|x_{11}|^{4}\le \mu$.
\end{itemize}
Then, we will prove Theorem \ref{thd1} under the above conditions.

\subsection{The proof of Theorem \ref{thd1}}
We begin by providing some necessary definitions and primary results that will be used in the proof.

Let $m_n(z)=m_{F^{\bs_n}}(z)$ and $\bd_n(z)=\bs_n-z\bi_p$. Define $\bd_{nk}(z)=\bd_n(z)-\frac1n\bb_k\bx_k\bx_k^*\bb_k^*,$
\begin{align*}
&\bR_n(z)=\frac1n\sum_{k=1}^n\beta_{nk}(z)\bS_k+\ba_n-z\bi_p,\quad \widehat\bR_{nj}(z)=\frac1n\sum_{k=1}^n\hat\beta_{nkj}(z)\bS_k+\ba_n-z\bi_p,\\
&\bR(z)=\frac1n\sum_{k=1}^n\frac1{1+e_{nk}(z)}\bS_k+\ba_n-z\bi_p,
\end{align*}
and
\begin{align*}
&\rho_{nk}(z)=\frac1{1+n^{-1}\bx_k^*\bb_k^*\bd_{nk}^{-1}(z)\bb_k\bx_k},\quad \theta_{nk}(z)=\frac1{1+n^{-1}\rtr\left[\bS_k\bR_{n}^{-1}(z)\right]},\\
&\beta_{nk}(z)=\frac1{1+n^{-1}\rtr\left[\bS_k\bd_{n}^{-1}(z)\right]},\quad \hat\beta_{nkj}(z)=\frac1{1+n^{-1}\rtr\left[\bS_k\bd_{nj}^{-1}(z)\right]}.
\end{align*}

It can be verified that
\begin{align}\label{ald1}
\left\|\bR_n^{-1}(z)\right\|\le\frac1{\Im z}\quad{\rm for } \ z\in \mathbb{C}^+.
\end{align}
In fact, we have
\begin{align*}
\bR_n(z)=&\left\{\frac1n\sum_{k=1}^n\frac{1+n^{-1}\Re\rtr\left[\bS_k\bd_{n}^{-1}(\bar z)\right]}{\left|1+n^{-1}\rtr\left[\bS_k\bd_{n}^{-1}(z)\right]\right|^2}\bS_k+\ba_n-\Re z\bi_p\right\}\\
&-i{\Im z}\left\{\frac1{n^2}\sum_{k=1}^n\frac{\rtr\left[\bS_k\bd_{n}^{-1}(z)\bd_{n}^{-1}(\bar z)\right]}{\left|1+n^{-1}\rtr\left[\bS_k\bd_{n}^{-1}(\bar z)\right]\right|^2}\bS_k+\bi_p\right\}\\
\triangleq&\bR_{n1}(z)-i\bR_{n2}(z),
\end{align*}
where both $\bR_{n1}(z)$ and $\bR_{n2}(z)$ are Hermitian matrices. Let ${\bf u}_j$ be the unit eigenvector of $\bR_n(z)$ that corresponds to $\lambda_j\left(\bR_n(z)\right)$. Then, we obtain
\begin{align*}
\left|\lambda_j\left(\bR_n(z)\right)\right|=&\left|{\bf u}_j^*\bR_{n1}(z){\bf u}_j-i{\bf u}_j^*\bR_{n2}(z){\bf u}_j\right|\\
\ge&\left|{\bf u}_j^*\bR_{n2}(z){\bf u}_j\right|\ge\lambda_{\min}\left(\bR_{n2}(z)\right)\ge{\Im z}.
\end{align*}
This finishes the proof of (\ref{ald1}). Using the same argument, it follows that
\begin{align}\label{ald2}
\left\|\widehat\bR_{nj}^{-1}(z)\right\|\le\frac1{\Im z}\quad{\rm for } \ z\in \mathbb{C}^+.
\end{align}

Now, we can show the proof of our main theorem. We shall proceed with three steps:
\subsubsection{Convergence of $m_n(z)-\frac1p\rtr\bR_n^{-1}(z)$}
Write
\begin{align*}
\bd_n(z)-\bR_n(z)=\frac1n\sum_{k=1}^n\bb_k\bx_k\bx_k^*\bb_k^*-\frac1n\sum_{k=1}^n\beta_{nk}(z)\bS_k.
\end{align*}
Taking the inverses and using the well-known formula
\begin{align}\label{ald4}
\left(\ba+\br\br^*\right)^{-1}=\ba^{-1}-\frac{\ba^{-1}\br\br^*\ba^{-1}}{1+\br^*\ba^{-1}\br},
\end{align}
we obtain
\begin{align*}
&\bd_n^{-1}(z)-\bR_n^{-1}(z)=-\frac1n\sum_{k=1}^n\bR_n^{-1}(z)\left[\bb_k\bx_k\bx_k^*\bb_k^*\bd_n^{-1}(z)-\beta_{nk}(z)\bS_k\bd_n^{-1}(z)\right]\\
=&-\frac1n\sum_{k=1}^n\rho_{nk}(z)\bR_n^{-1}(z)\bb_k\bx_k\bx_k^*\bb_k^*\bd_{nk}^{-1}(z)+\frac1n\sum_{k=1}^n\beta_{nk}(z)\bR_n^{-1}(z)\bS_k\bd_n^{-1}(z).
\end{align*}
For any $p\times p$ Hermitian matrix $\bt_n$ with a bounded spectral norm, it follows that
\begin{align*}
\bt_n\bd_n^{-1}(z)-\bt_n\bR_n^{-1}(z)=&-\frac1n\sum_{k=1}^n\rho_{nk}(z)\bt_n\bR_n^{-1}(z)\bb_k\bx_k\bx_k^*\bb_k^*\bd_{nk}^{-1}(z)\\
&+\frac1n\sum_{k=1}^n\beta_{nk}(z)\bt_n\bR_n^{-1}(z)\bS_k\bd_n^{-1}(z).
\end{align*}
Taking the trace and dividing by $p$, one finds
\begin{align*}
w_{\bt_n}(z)\triangleq&\frac1p\rtr\left[\bt_n\bd_n^{-1}(z)\right]-\frac1p\rtr\left[\bt_n\bR_n^{-1}(z)\right]
=\frac1{pn}\sum_{k=1}^n\beta_{nk}(z)\rtr\left[\bt_n\bR_n^{-1}(z)\bS_k\bd_n^{-1}(z)\right]\\
&-\frac1{pn}\sum_{k=1}^n\rho_{nk}(z)\bx_k^*\bb_k^*\bd_{nk}^{-1}(z)\bt_n\bR_n^{-1}(z)\bb_k\bx_k\\
=&-\frac1{pn}\sum_{k=1}^n\rho_{nk}(z)\bx_k^*\bb_k^*\bd_{nk}^{-1}(z)\bt_n\left[\bR_n^{-1}(z)-\widehat\bR_{nk}^{-1}(z)\right]\bb_k\bx_k\\
&-\frac1{pn}\sum_{k=1}^n\rho_{nk}(z)\left[\bx_k^*\bb_k^*\bd_{nk}^{-1}(z)\bt_n\widehat\bR_{nk}^{-1}(z)\bb_k\bx_k
-\rtr\left(\bd_{nk}^{-1}(z)\bt_n\widehat\bR_{nk}^{-1}(z)\bS_k\right)\right]\\
&-\frac1{pn}\sum_{k=1}^n\rho_{nk}(z)\left[\rtr\left(\bd_{nk}^{-1}(z)\bt_n\widehat\bR_{nk}^{-1}(z)\bS_k\right)
-\rtr\left(\bd_{nk}^{-1}(z)\bt_n\bR_n^{-1}(z)\bS_k\right)\right]\\
&-\frac1{pn}\sum_{k=1}^n\rho_{nk}(z)\left[\rtr\left(\bd_{nk}^{-1}(z)\bt_n\bR_n^{-1}(z)\bS_k\right)
-\rtr\left(\bd_{n}^{-1}(z)\bt_n\bR_n^{-1}(z)\bS_k\right)\right]\\
&-\frac1{pn}\sum_{k=1}^n\left[\rho_{nk}(z)-\beta_{nk}(z)\right]\rtr\left(\bd_{n}^{-1}(z)\bt_n\bR_n^{-1}(z)\bS_k\right)\\
\triangleq&\frac1{p}\sum_{k=1}^n\left(d_{k1}(z)+d_{k2}(z)+d_{k3}(z)+d_{k4}(z)+d_{k5}(z)\right).
\end{align*}
Following the same strategy that used in the proof of (3.4) in \cite{Bai1998No} or in the proof of (6.2.5) in \cite{bai2010spectral}, we can easily check that $\rho_{nk}(z)$, $\beta_{nk}(z)$, and $\hat\beta_{nkj}(z)$ are all bounded in absolute values by $|z|/\Im z$. Note that by (\ref{ald4}),
\begin{align}\label{ald3}
&\bR_n^{-1}(z)-\widehat\bR_{nk}^{-1}(z)=\frac1n\sum_{j=1}^n\left(\hat\beta_{njk}(z)-\beta_{nj}(z)\right)\bR_n^{-1}(z)\bS_j\widehat\bR_{nk}^{-1}(z)\\
=&-\frac1{n^3}\sum_{j=1}^n\hat\beta_{njk}(z)\beta_{nj}(z)\rho_{nk}(z)\left(\bx_k^*\bb_k^*\bd_{nk}^{-1}(z)\bS_j\bd_{nk}^{-1}(z)\bb_k\bx_k\right)\bR_n^{-1}(z)\bS_j\widehat\bR_{nk}^{-1}(z).\notag
\end{align}
Using (\ref{ald1}), (\ref{ald2}), (\ref{ald3}) and the fact that $\|(\ba-z\bi_n)^{-1}\|\le\frac1{\Im z}$ for any $n\times n$ Hermitian matrix $\ba$, we have
\begin{align*}
|d_{k1}(z)|\le&\frac{C|z|}{n(\Im z)^2}\left|\bx_k^*\bx_k\right|\left\|\bR_n^{-1}(z)-\widehat\bR_{nk}^{-1}(z)\right\|\\
\le&\frac{C|z|}{n^4(\Im z)^4}\left|\bx_k^*\bx_k\right|\sum_{j=1}^n\left|\hat\beta_{njk}(z)\beta_{nj}(z)\rho_{nk}(z)\bx_k^*\bb_k^*\bd_{nk}^{-1}(z)\bS_j\bd_{nk}^{-1}(z)\bb_k\bx_k\right|\\
\le&\frac{C|z|}{n^3(\Im z)^5}\left|\bx_k^*\bx_k\right|\sum_{j=1}^n\left|\hat\beta_{njk}(z)\beta_{nj}(z)\right|
\le\frac{C|z|^3}{n^2(\Im z)^7}\left|\bx_k^*\bx_k\right|.
\end{align*}
Here, we use the fact that
$$|n^{-1}\rho_{nk}(z)\(\bx_k^*\bb_k^*\bd_{nk}^{-1}(z)\bd_{nk}^{-1}(\bar {z})\bb_k\bx_k\)|\leq \frac{1}{\Im z}.$$
We note here that the above inequality will be used several times in the remainder of the paper.

By Lemma \ref{led2}, for $t\ge1$, we obtain
\begin{align*}
&\re|d_{k1}(z)|^{2t}\le\frac{C_t|z|^{6t}}{n^{4t}(\Im z)^{14t}}\re\left|\bx_k^*\bx_k\right|^{2t}\le\frac{C_t|z|^{6t}}{n^{4t}(\Im z)^{14t}}\left[\re\left|\bx_k^*\bx_k-\rtr(\bi_{m_k})\right|^{2t}+m_k^{2t}\right]\\
\le&\frac{C_t|z|^{6t}}{n^{4t}(\Im z)^{14t}}\left[m_k^{t}+\tau_n^{4t-4}n^{2t-2}m_k+m_k^{2t}\right]\le\frac{C_t|z|^{6t}}{n^{2 t}(\Im z)^{14t}},\notag
\end{align*}
which is summable.  Thus, as a consequence of Borel-Cantelli lemma, we arrive at
\begin{align}\label{ald5}
d_{k1}(z)\xrightarrow{a.s.}0.
\end{align}

From Lemma \ref{led2} and (\ref{ald2}), for any $t\ge1$, we obtain
\begin{align*}
\re|d_{k2}(z)|^{2t}\le&\frac{|z|^{2t}}{n^{2t}(\Im z)^{2t}}\re\left|\bx_k^*\bb_k^*\bd_{nk}^{-1}(z)\bt_n\widehat\bR_{nk}^{-1}(z)\bb_k\bx_k
-\rtr\left(\bd_{nk}^{-1}(z)\bt_n\widehat\bR_{nk}^{-1}(z)\bS_k\right)\right|^{2t}\\
\le&\frac{C_t|z|^{2t}}{n^{2t}(\Im z)^{2t}}\Bigg[\re\left(\rtr\left(\bd_{nk}^{-1}(z)\bt_n\widehat\bR_{nk}^{-1}(z)\bS_k\widehat\bR_{nk}^{-1}(\bar z)\bt_n\bd_{nk}^{-1}(\bar z)\bS_k\right)\right)^{t}\notag\\
&\quad\quad\quad+\tau_n^{4t-4}n^{2t-2}\re\rtr\left(\bd_{nk}^{-1}(z)\bt_n\widehat\bR_{nk}^{-1}(z)\bS_k\widehat\bR_{nk}^{-1}(\bar z)\bt_n\bd_{nk}^{-1}(\bar z)\bS_k\right)^{t}\Bigg]\notag\\
\le&\frac{C_t|z|^{2t}}{n^{2t}(\Im z)^{2t}}\left[\left(\frac n{(\Im z)^4}\right)^{t}+\tau_n^{4t-4}n^{2t-1}\frac1{(\Im z)^{4t}}\right]\le\frac{C_t|z|^{2t}}{(\Im z)^{6t}}\left(\frac1{n^t}+\frac{\tau_n^{4t-4}}n\right).\notag
\end{align*}
The last bound is summable when $t>1$, so by Borel-Cantelli lemma we have
\begin{align}\label{ald6}
d_{k2}(z)\xrightarrow{a.s.}0.
\end{align}

Based on (\ref{ald1}), (\ref{ald2}), and (\ref{ald3}), we have
\begin{align*}
|d_{k3}(z)|\le&\frac{C|z|}{n\Im z}\left|\rtr\left[\bS_k\bd_{nk}^{-1}(z)\bt_n\left(\widehat\bR_{nk}^{-1}(z)-\bR_n^{-1}(z)\right)\right]\right|\\
\le&\frac{C|z|}{n^3(\Im z)^2}\sum_{j=1}^n\left|\hat\beta_{njk}(z)\beta_{nj}(z)\rtr\left(\bS_k\bd_{nk}^{-1}(z)\bt_n\bR_n^{-1}(z)\bS_j\widehat\bR_{nk}^{-1}(z)\right)\right|\le\frac{C|z|^3}{n(\Im z)^7}.\notag
\end{align*}
Hence, by applying Borel-Cantelli lemma, we get
\begin{align}\label{ald7}
d_{k3}(z)\xrightarrow{a.s.}0.
\end{align}

Using (\ref{ald1}) and (\ref{ald4}), one finds that
\begin{align*}
|d_{k4}(z)|=&\left|\frac{\rho_{nk}^2(z)}{n^2}\bx_k^*\bb_k^*\bd_{nk}^{-1}(z)\bt_n\bR_n^{-1}(z)\bS_k\bd_{nk}^{-1}(z)\bb_k\bx_k\right|\le\frac{C|z|}{n(\Im z)^3}.
\end{align*}
Thus, we obtain
\begin{align}\label{ald8}
d_{k4}(z)\xrightarrow{a.s.}0.
\end{align}

Note that
\begin{align*}
\left|\rho_{nk}(z)-\beta_{nk}(z)\right|\le&\left|\rho_{nk}(z)-\hat\beta_{nkk}(z)\right|+\left|\hat\beta_{nkk}(z)-\beta_{nk}(z)\right|\\
=&\frac{|\rho_{nk}(z)\hat\beta_{nkk}(z)|}{n}\left|\bx_k^*\bb_k^*\bd_{nk}^{-1}(z)\bb_k\bx_k-\rtr\left(\bd_{nk}^{-1}(z)\bS_k\right)\right|\\
&+\frac{|\hat\beta_{nkk}(z)\beta_{nk}(z)\rho_{nk}(z)|}{n^2}\left|\bx_k^*\bb_k^*\bd_{nk}^{-1}(z)\bS_k\bd_{nk}^{-1}(z)\bb_k\bx_k\right|\\
\le&\frac{|z|^2}{n(\Im z)^2}\left|\bx_k^*\bb_k^*\bd_{nk}^{-1}(z)\bb_k\bx_k-\rtr\left(\bd_{nk}^{-1}(z)\bS_k\right)\right|+\frac{C|z|^2}{n(\Im z)^3}.
\end{align*}
Then, from Lemma \ref{led2}, for any $t\ge1$, we have
\begin{align*}
\re|d_{k5}(z)|^{2t}\le&\frac{C_t|z|^{4t}}{n^{2t}(\Im z)^{8t}}\re\left|\bx_k^*\bb_k^*\bd_{nk}^{-1}(z)\bb_k\bx_k-\rtr\left(\bd_{nk}^{-1}(z)\bS_k\right)\right|^{2t}+\frac{C_t|z|^{4t}}{n^{2t}(\Im z)^{10t}}\\
\le&\frac{C_t|z|^{4t}}{n^{2t}(\Im z)^{8t}}\left[\left(\frac n{(\Im z)^2}\right)^t+\tau_n^{4t-4}n^{2t-1}\frac1{(\Im z)^{2t}}\right]+\frac{C_t|z|^{4t}}{n^{2t}(\Im z)^{10t}}\notag\\
\le&\frac{C_t|z|^{4t}\tau_n^{4t-4}}{n(\Im z)^{10t}}+\frac{C_t|z|^{4t}}{n^{t}(\Im z)^{10t}}.\notag
\end{align*}
The last bound is summable when $t>1$, thus by Borel-Cantelli lemma   we arrive at
\begin{align}\label{ald9}
d_{k5}(z)\xrightarrow{a.s.}0.
\end{align}

Therefore, from (\ref{ald5})-(\ref{ald9}), we conclude that
\begin{align}\label{ald18}
w_{\bt_n}=\frac1{p}\sum_{k=1}^n\left(d_{k1}(z)+d_{k2}(z)+d_{k3}(z)+d_{k4}(z)+d_{k5}(z)\right)\xrightarrow{a.s.}0,
\end{align}
which implies that for fixed $z\in\mathbb{C}^+$,
\begin{align}\label{ald10}
w_{\bi_p}=m_n(z)-\frac1p\rtr\left(\bR_n^{-1}(z)\right)\xrightarrow{a.s.}0.
\end{align}

\subsubsection{Convergence of $\frac1n\rtr\left[\bS_k\bd_{n}^{-1}(z)\right]-e_{nk}(z)$}

Rewrite
\begin{align}\label{ald17}
s_{nk}\triangleq\frac1n\rtr\Big[\bS_k\bd_{n}^{-1}(z)\Big]-&e_{nk}(z)=c_nw_{\bS_k}+\frac1n\rtr\left[\bS_k\bR_{n}^{-1}(z)\right]-\frac1n\rtr\left[\bS_k\bR^{-1}(z)\right]\\
=&c_nw_{\bS_k}+\frac1n\rtr\left[\bS_k\left(\bR_{n}^{-1}(z)-\bR^{-1}(z)\right)\right]\notag\\
=&c_nw_{\bS_k}+\frac1{n^2}\sum_{j=1}^n\frac{\beta_{nj}(z)s_{nj}}{1+e_{nj}(z)}\rtr\left[\bS_k\bR_{n}^{-1}(z)\bS_j\bR^{-1}(z)\right].\notag
\end{align}
\cite{Sebastian2012Large} showed that $e_{nk}(z),k=1,\cdots,n$ are all Stieltjes transforms of the nonnegative finite measures on $\mathbb{R}^+$. Hence, by the same argument above inequality (12) in \cite{Paul2009No}, we have
\begin{align*}
\left|\frac1{1+e_{nk}(z)}\right|\le\frac{|z|}{\Im z}
\end{align*}
and
\begin{align*}
\left\|\bR^{-1}(z)\right\|\le\frac1{\Im z}.
\end{align*}
By (\ref{ald1}), it follows that
\begin{align*}
\left|s_{nk}\right|\le&c_n\left|w_{\bS_k}\right|
+\kappa\max_{1\le j\le n}\left|s_{nj}\right|,
\end{align*}
where $\max_{1\le j\le n}\|\bS_j\|\le\widehat M$, and $\kappa=\frac{(c+1)|z|^2\widehat M^2}{(\Im z)^4}$, which implies
\begin{align*}
\left(1-\kappa\right)\max_{1\le k\le n}\left|s_{nk}\right|\le&(c+1)\max_{1\le k\le n}\left|w_{\bS_k}\right|.
\end{align*}
On the set $\left\{z\in\mathbb{C}^+:0<\kappa<1\right\}$, one obtains for $\varepsilon>0$ and $t>1$
\begin{align*}
&{\rm P}\left\{\left(1-\kappa\right)\max_{1\le k\le n}\left|s_{nk}\right|>\varepsilon\right\}\le{\rm P}\left\{(c+1)\max_{1\le k\le n}\left|w_{\bS_k}\right|>\varepsilon\right\}\le\frac{(c+1)^{2t}}{\varepsilon^{2t}}\sum_{k=1}^n\re\left|w_{\bS_k}\right|^{2t},
\end{align*}
which is summable according to the last section. Consequently, we obtain for fixed $z\in\big\{z\in\mathbb{C}^+:0<\kappa<1\big\}$
\begin{align*}
\max_{1\le k\le n}\left|s_{nk}\right|\xrightarrow{a.s.}0.
\end{align*}
By Vitali's convergence theorem (Lemma \ref{led3}), we find that for all $z\in\mathbb{C}^+$,
\begin{align}\label{ald11}
\max_{1\le k\le n}\left|s_{nk}\right|\xrightarrow{a.s.}0.
\end{align}

\subsubsection{Completion of the proof of Theorem \ref{thd1}}

Note that
\begin{align*}
m_n(z)-m_n^0(z)=&w_{\bi_p}+\frac1p\rtr\left(\bR_n^{-1}(z)\right)-\frac1p\rtr\left(\bR^{-1}(z)\right)\\
=&w_{\bi_p}+\frac1{pn}\sum_{k=1}^n\frac{\beta_{nk}(z)s_{nk}}{1+e_{nk}(z)}\rtr\left(\bR_n^{-1}(z)\bS_k\bR^{-1}(z)\right).
\end{align*}
By (\ref{ald1}), (\ref{ald10}), and (\ref{ald11}), for fixed $z\in\mathbb{C}^+$, we have
\begin{align*}
\left|m_n(z)-m_n^0(z)\right|
\le&\left|w_{\bi_p}\right|+\frac{\widehat M|z|^2}{(\Im z)^4}\max_{1\le k\le n}\left|s_{nk}\right|\xrightarrow{a.s.}0.
\end{align*}
Using Vitali's convergence theorem, we have
\begin{align*}
m_n(z)-m_n^0(z)\xrightarrow{a.s.}0\quad{\rm for \ all} \ z\in\mathbb{C}^+.
\end{align*}

In \cite{Sebastian2012Large}, it has been shown that the functions $e_{n1}(z),\cdots,e_{nn}(z)$ form the unique solution of
\begin{align*}
e_{nk}(z)=\frac1n\rtr\left[\bS_k\bR^{-1}(z)\right],
\end{align*}
which is the Stieltjes transform of a nonnegative finite measure on $\mathbb{R}^+$. Thus, the proof of Theorem \ref{thd1} is complete.

\section{lemmas}

\begin{lemma}[Theorem A.46 of \cite{bai2010spectral}]\label{led1}
If $\ba$ and $\bb$ are Hermitian, then
\begin{align*}
\max_k|\lambda_k(\ba)-\lambda_k(\bb)|\le\|\ba-\bb\|.
\end{align*}
\end{lemma}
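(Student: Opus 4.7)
The plan is to prove this via the Courant--Fischer minimax characterization of eigenvalues of Hermitian matrices. Order the eigenvalues (in either nonincreasing or nondecreasing order consistently for both $\ba$ and $\bb$); since both matrices are Hermitian, all eigenvalues are real, so the absolute value on the left-hand side makes sense. For a Hermitian matrix $\mathbf{C}$ of size $n$, the Courant--Fischer formula reads
\begin{align*}
\lambda_k(\mathbf{C})=\max_{\substack{V\subseteq\mathbb{C}^n\\ \dim V=k}}\min_{\substack{x\in V\\ \|x\|=1}}x^*\mathbf{C}x,
\end{align*}
with an analogous dual $\min$-$\max$ formula over subspaces of dimension $n-k+1$. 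This is the standard tool for comparing eigenvalues of two Hermitian matrices that differ by a bounded perturbation.

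First I would write $\ba=\bb+(\ba-\bb)$ and observe that for every unit vector $x$,
\begin{align*}
\lj x^*(\ba-\bb)x\rj\le\|\ba-\bb\|,
\end{align*}
since the operator norm of a Hermitian matrix equals its numerical radius. This gives the two-sided bound $x^*\bb x-\|\ba-\bb\|\le x^*\ba x\le x^*\bb x+\|\ba-\bb\|$ uniformly in unit vectors $x$. Next I would substitute these inequalities into the minimax formula: taking the $\min$ over $x$ in a fixed $k$-dimensional subspace $V$ and then the $\max$ over such $V$ preserves the inequality, yielding
\begin{align*}
\lambda_k(\bb)-\|\ba-\bb\|\le\lambda_k(\ba)\le\lambda_k(\bb)+\|\ba-\bb\|.
\end{align*}
Rearranging gives $|\lambda_k(\ba)-\lambda_k(\bb)|\le\|\ba-\bb\|$ for every $k$, from which the claim follows by taking the maximum over $k$.

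There is essentially no obstacle here; this is the classical Weyl perturbation inequality. The only thing to be careful about is using a consistent ordering of the eigenvalues for $\ba$ and $\bb$ (so that $\lambda_k(\ba)$ and $\lambda_k(\bb)$ correspond to the same index in the ordering), and noting that the bound $|x^*\mathbf{C}x|\le\|\mathbf{C}\|$ does not even require $\mathbf{C}$ to be Hermitian, so it applies directly to the Hermitian difference $\ba-\bb$. Since the result is quoted from Theorem A.46 of \cite{bai2010spectral}, one could alternatively just cite the reference, but the minimax proof above is short and self-contained.
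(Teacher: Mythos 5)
Your proof is correct: the Courant--Fischer minimax argument with the uniform bound $\lj x^*(\ba-\bb)x\rj\le\|\ba-\bb\|$ over unit vectors yields Weyl's perturbation inequality $\lambda_k(\bb)-\|\ba-\bb\|\le\lambda_k(\ba)\le\lambda_k(\bb)+\|\ba-\bb\|$ for each $k$, and hence the claim, provided (as you note) the eigenvalues of $\ba$ and $\bb$ are listed in the same order and the matrices have the same size. The paper itself gives no proof of this lemma --- it is quoted verbatim from Theorem A.46 of \cite{bai2010spectral} --- so your argument simply supplies the standard self-contained justification that the cited reference provides; there is no gap and nothing to reconcile with the paper's (nonexistent) argument.
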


\begin{lemma}[Lemma B.26 of \cite{bai2010spectral}]\label{led2}
Let $\ba$ be an $n\times n$ nonrandom matrix and $\bx=(x_1,\cdots,x_n)'$ be a random vector of independent entries. Assume that $\re(x_j)=0$, $\re|x_j|^2$, and $\re|x_j|^l\le\nu_l$. Then, for any $t\ge1$,
\begin{align*}
\re\left|\bx^*\ba\bx-\rtr(\ba)\right|^t\le C_t\left[\left(\nu_4\rtr\left(\ba\ba^*\right)\right)^{t/2}+\nu_{2t}\rtr\left(\ba\ba^*\right)^{t/2}\right],
\end{align*}
where $C_t$ is a constant that only depends on $t$.
\end{lemma}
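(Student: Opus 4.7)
The plan is to write $Y=\bx^*\ba\bx-\rtr(\ba)$ as the sum of a ``diagonal'' contribution and an ``off-diagonal'' contribution, and control each piece by a Rosenthal-type inequality and a Burkholder martingale inequality respectively. Write
\begin{align*}
Y=\sum_{i=1}^n a_{ii}\bigl(|x_i|^2-1\bigr)+\sum_{i\ne j}\bar x_i a_{ij}x_j\triangleq D+O.
\end{align*}
By the $C_r$-inequality it suffices to bound $\re|D|^t$ and $\re|O|^t$ separately.

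For the diagonal piece $D$, the summands $a_{ii}(|x_i|^2-1)$ are independent, centered, and satisfy $\re|a_{ii}(|x_i|^2-1)|^2\le \nu_4|a_{ii}|^2$ and $\re|a_{ii}(|x_i|^2-1)|^t\le C_t\nu_{2t}|a_{ii}|^t$. Rosenthal's inequality then yields
\begin{align*}
\re|D|^t\le C_t\Bigl[\bigl(\nu_4\textstyle\sum_i|a_{ii}|^2\bigr)^{t/2}+\nu_{2t}\textstyle\sum_i|a_{ii}|^t\Bigr],
\end{align*}
and since $\sum_i|a_{ii}|^2\le\rtr(\ba\ba^*)$ and $\sum_i|a_{ii}|^t\le(\sum_i|a_{ii}|^2)^{t/2}\le\rtr(\ba\ba^*)^{t/2}$ (the latter by monotonicity of $\ell^p$ norms for $t\ge2$; for $1\le t<2$ we dominate $D$ by its $L^2$-norm), we obtain the required bound for $D$.

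For the off-diagonal piece $O$, introduce the filtration $\mathcal F_k=\sigma(x_1,\dots,x_k)$ and set
\begin{align*}
\xi_k=(\re_k-\re_{k-1})Y=\bar x_k\!\sum_{j<k}\!a_{kj}x_j+x_k\!\sum_{j<k}\!a_{jk}\bar x_j,\qquad k=1,\dots,n,
\end{align*}
where $\re_k=\re[\cdot\mid\mathcal F_k]$. Then $\{\xi_k\}$ is a martingale difference sequence with $\sum_k\xi_k=O$. Burkholder's inequality (see Lemma 2.12 of \cite{bai2010spectral}) gives
\begin{align*}
\re|O|^t\le C_t\,\re\Bigl(\textstyle\sum_{k}\re_{k-1}|\xi_k|^2\Bigr)^{t/2}+C_t\,\textstyle\sum_k\re|\xi_k|^t.
\end{align*}
A direct computation using $\re|x_k|^2=1$ and independence shows $\re_{k-1}|\xi_k|^2=\sum_{j<k}|a_{kj}|^2|x_j|^2+\sum_{j<k}|a_{jk}|^2|x_j|^2+(\text{cross terms in }\bar x_j,\bar x_{j'})$; taking expectation and summing over $k$ produces a term bounded by $\nu_4\rtr(\ba\ba^*)$. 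By Jensen/Minkowski, $\re(\sum_k\re_{k-1}|\xi_k|^2)^{t/2}\le C_t(\nu_4\rtr(\ba\ba^*))^{t/2}$. For the individual term $\re|\xi_k|^t$, applying Rosenthal's inequality conditionally to the sums $\sum_{j<k}a_{kj}x_j$ (which are weighted sums of independent centered variables) followed by taking the expectation in $x_k$, we get $\re|\xi_k|^t\le C_t\nu_{2t}(\sum_{j<k}|a_{kj}|^2+\sum_{j<k}|a_{jk}|^2)^{t/2}$; summing over $k$ and using $\sum_k(\sum_{j}|a_{kj}|^2)^{t/2}\le\rtr(\ba\ba^*)^{t/2}$ (once more by $\ell^{t/2}\subset\ell^1$ for $t\ge2$, with the $1\le t<2$ case handled by $L^2$ domination) yields the required bound on $\re|O|^t$.

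Combining the bounds for $D$ and $O$ gives the inequality. The main obstacle is the careful handling of the off-diagonal piece: one must choose the right martingale structure so that the conditional variances aggregate to $\nu_4\rtr(\ba\ba^*)$ rather than a larger expression, and one must invoke Rosenthal (conditionally) a second time to obtain a bound on the individual $\re|\xi_k|^t$ whose sum collapses to $\nu_{2t}\rtr(\ba\ba^*)^{t/2}$. Everything else is a bookkeeping exercise with the $C_r$ and Jensen inequalities.
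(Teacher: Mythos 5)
Your overall skeleton (split off the diagonal, Rosenthal for the independent diagonal sum, martingale differences plus a Burkholder--Rosenthal inequality for the off-diagonal sum) is exactly the standard route; note that the paper itself does not prove this lemma but quotes it as Lemma B.26 of \cite{bai2010spectral}, so the only meaningful comparison is with that source, whose proof your outline mirrors. However, there is one genuine gap at the decisive step. You claim that ``by Jensen/Minkowski'' $\re\bigl(\sum_k\re_{k-1}|\xi_k|^2\bigr)^{t/2}\le C_t\bigl(\nu_4\rtr(\ba\ba^*)\bigr)^{t/2}$. For $t\ge2$ Jensen runs in the opposite direction ($\re Z^{t/2}\ge(\re Z)^{t/2}$), so you may not replace the random conditional square function by its expectation; and $\sum_k\re_{k-1}|\xi_k|^2$ is not a constant but a quadratic form $\bx^*\mb B\bx$ (plus its trace) in $x_1,\dots,x_{n-1}$, with $\mb B\succeq0$ and $\rtr(\mb B)\le C\rtr(\ba\ba^*)$. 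If instead you use Minkowski's integral inequality you get a bound of order $\nu_t\bigl(\rtr(\ba\ba^*)\bigr)^{t/2}$, i.e.\ a moment higher than the fourth enters the first term, which defeats the purpose of the lemma (in the paper's truncation arguments $\nu_t$ grows like a power of $n$, while the first term must carry only $\nu_4$). The standard repair, and what the cited proof does, is induction on $t$: write $\sum_k\re_{k-1}|\xi_k|^2=\rtr(\mb B)+\bigl(\bx^*\mb B\bx-\rtr(\mb B)\bigr)$ and apply the lemma itself at exponent $t/2$ to the centered quadratic form, the recursion terminating once the exponent drops to at most $2$.

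A secondary point: your bounds produce $\bigl(\rtr(\ba\ba^*)\bigr)^{t/2}$ in the $\nu_{2t}$ term, whereas the lemma as stated and as used in the paper has $\rtr\bigl[(\ba\ba^*)^{t/2}\bigr]$, the trace of the matrix power; this sharper form is what yields, e.g., the single factor $m_k$ (rather than $m_k^t$) in the truncation step and the $n^{2t-1}$ (rather than $n^{3t-2}$) factor in the $d_{k2}$ estimate, and those exponents are what make the almost-sure summability work under only a fourth moment. Your argument can be upgraded at no cost: instead of the superadditivity bound $\sum_k b_k^{t/2}\le(\sum_k b_k)^{t/2}$ with $b_k=\sum_j|a_{kj}|^2=(\ba\ba^*)_{kk}$, use the spectral Jensen inequality $\sum_k\bigl((\ba\ba^*)_{kk}\bigr)^{t/2}\le\rtr\bigl[(\ba\ba^*)^{t/2}\bigr]$ (and similarly $\sum_i|a_{ii}|^t\le\rtr\bigl[(\ba\ba^*)^{t/2}\bigr]$ for the diagonal part). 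Finally, two small items: your $\xi_k$ is the martingale difference of $O$, not of $Y$ as written, and the step $\re|\xi_k|^t\le C_t\nu_{2t}(\cdots)^{t/2}$ implicitly uses $\nu_t^2\le\nu_{2t}$, which is fine but should be said; also Burkholder--Rosenthal in the form you quote requires $t\ge2$, so the case $1\le t<2$ of the off-diagonal part needs the same $L^2$-domination remark you made for the diagonal part.
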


\begin{lemma}[Vitali's convergence theorem]\label{led3}
Let $f_1,f_2,\cdots$ be analytic in $D$, which is a connected open set of \ $\mathbb{C}$, satisfying $|f_n(z)|\le M$ for every $n$ and $z$ in $D$ and $f_n(z)$ converges as $n\to\infty$ for each $z$ in a subset of $D$ with a limit point in $D$. Then, there exists a function $f$ analytic in $D$ for which $f_n(z)\to f(z)$ for all $z\in D$.
\end{lemma}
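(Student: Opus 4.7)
The plan is to deduce this version of Vitali's theorem from two classical pillars of complex analysis: Montel's normality theorem (every uniformly bounded family of holomorphic functions on an open set is normal, i.e., relatively compact in the topology of locally uniform convergence) and the identity theorem for holomorphic functions on a connected open set. The uniform bound $|f_n(z)|\le M$ will furnish precompactness of $(f_n)$ in the compact-open topology on $D$, while the assumed convergence on a set with a limit point in $D$ will pin down any subsequential limit uniquely.

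First I would invoke Montel's theorem to conclude that $(f_n)$ is a normal family on $D$. Consequently, every subsequence of $(f_n)$ admits a further subsequence that converges locally uniformly on $D$ to some limit function, which by Weierstrass' theorem is automatically holomorphic on $D$.

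Next I would establish uniqueness of these subsequential limits. Let $E\subseteq D$ denote the set on which $f_n(z)$ converges, and write $f^{\ast}(z)=\lim_n f_n(z)$ for $z\in E$. If $f_{n_k}\to g$ and $f_{m_k}\to h$ locally uniformly on $D$ with $g,h$ holomorphic on $D$, then in particular $g(z)=h(z)=f^{\ast}(z)$ for every $z\in E$. Since $E$ has a limit point in the connected open set $D$, the identity theorem forces $g\equiv h$ on $D$. Hence all convergent subsequences of $(f_n)$ share a common holomorphic limit, which I will call $f$.

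Finally I would upgrade this to convergence of the entire sequence to $f$. If $(f_n)$ failed to converge to $f$ locally uniformly on $D$, there would exist a compact set $K\subseteq D$, an $\varepsilon>0$, and a subsequence $(f_{n_j})$ with $\sup_{z\in K}|f_{n_j}(z)-f(z)|\ge\varepsilon$. Applying Montel's theorem to $(f_{n_j})$ would then yield a further subsequence converging locally uniformly to a holomorphic function which, by the uniqueness step, must coincide with $f$, contradicting the lower bound on $K$. Locally uniform convergence in particular delivers the pointwise statement $f_n(z)\to f(z)$ for every $z\in D$ claimed by the lemma. The only real obstacle here is conceptual rather than computational: one must blend soft compactness (Montel) with rigidity (identity theorem), and the essential hypothesis is precisely that the convergence set possess a limit point inside $D$, without which uniqueness of the subsequential limit would break down and the argument would collapse.
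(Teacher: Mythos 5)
Your argument is correct and complete: Montel's theorem gives normality from the uniform bound, Weierstrass' theorem makes every subsequential limit analytic, the identity theorem on the connected set $D$ (using that the convergence set has a limit point in $D$) forces all subsequential limits to coincide, and the standard subsequence--of--a--subsequence contradiction upgrades this to locally uniform convergence of the whole sequence, which in particular gives the pointwise claim $f_n(z)\to f(z)$ on $D$. The paper itself states this lemma as a classical auxiliary result (Vitali's convergence theorem) without proof, so there is no in-paper argument to compare against; your proof is the standard textbook one, and in fact proves slightly more than the lemma asserts, namely uniform convergence on compact subsets of $D$ rather than mere pointwise convergence.
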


\section{Acknowledgements}
The author wishes to thank the editor and two referees for their comments and valuable suggestions, which have led to great
improvements of this paper.


\begin{thebibliography}{25}
\providecommand{\natexlab}[1]{#1}
\providecommand{\url}[1]{\texttt{#1}}
\expandafter\ifx\csname urlstyle\endcsname\relax
  \providecommand{\doi}[1]{doi: #1}\else
  \providecommand{\doi}{doi: \begingroup \urlstyle{rm}\Url}\fi

\bibitem[Anderson et~al.(2010)Anderson, Guionnet, and
  Zeitouni]{anderson2010introduction}
Greg~W Anderson, Alice Guionnet, and Ofer Zeitouni.
\newblock \emph{An introduction to random matrices}, volume 118.
\newblock Cambridge University Press, 2010.

\bibitem[Anderson(1958)]{anderson1958introduction}
Theodore~Wilbur Anderson.
\newblock \emph{An introduction to multivariate statistical analysis},
  volume~2.
\newblock Wiley New York, 1958.

\bibitem[Bai and Silverstein(1998)]{Bai1998No}
Z.~D. Bai and Jack~W. Silverstein.
\newblock No eigenvalues outside the support of the limiting spectral
  distribution of large-dimensional sample covariance matrices.
\newblock \emph{Annals of Probability}, 26\penalty0 (1):\penalty0 316--345,
  1998.

\bibitem[Bai and Silverstein(2010)]{bai2010spectral}
Zhidong Bai and Jack~William Silverstein.
\newblock \emph{Spectral analysis of large dimensional random matrices}.
\newblock Springer, 2010.

\bibitem[Bai and Zhou(2008)]{Bai2008Large}
Zhidong Bai and Wang Zhou.
\newblock Large sample covariance matrices without independence structures in
  columns.
\newblock \emph{Statistica Sinica}, 18\penalty0 (18):\penalty0 425--442, 2008.

\bibitem[Collins and Smith(2004)]{Collins2004Introduction}
Allan Collins and Edward~E. Smith.
\newblock \emph{Introduction to Machine Learning}.
\newblock 2004.

\bibitem[Johnstone(2008)]{Johnstone2008Multivariate}
Iain~M. Johnstone.
\newblock Multivariate analysis and jacobi ensembles: Largest eigenvalue,
  tracy-widom limits and rates of convergence.
\newblock \emph{Annals of Statistics}, 36\penalty0 (6):\penalty0 2638, 2008.

\bibitem[Johnstone(2009)]{Johnstone2009APPROXIMATE}
Iain~M. Johnstone.
\newblock Approximate null distribution of the largest root in multivariate
  analysis.
\newblock \emph{Annals of Applied Statistics}, 3\penalty0 (4):\penalty0 1616,
  2009.

\bibitem[Johnstone and Lu(2009)]{Johnstone2009On}
Iain~M. Johnstone and Arthur~Yu Lu.
\newblock On consistency and sparsity for principal components analysis in high
  dimensions.
\newblock \emph{Publications of the American Statistical Association},
  104\penalty0 (486):\penalty0 682--693, 2009.

\bibitem[Kammoun and Alouini(2016)]{Kammoun2016No}
Abla Kammoun and Mohamed~Slim Alouini.
\newblock No eigenvalues outside the limiting support of generally correlated
  gaussian matrices.
\newblock \emph{IEEE Transactions on Information Theory}, 62\penalty0
  (7):\penalty0 4312--4326, 2016.

\bibitem[Karoui(2009{\natexlab{a}})]{Karoui2009}
Noureddine~El Karoui.
\newblock Concentration of measure and spectra of random matrices: Applications
  to correlation matrices, elliptical distributions and beyond.
\newblock \emph{Annals of Applied Probability}, 19\penalty0 (6):\penalty0
  2362--2405, 2009{\natexlab{a}}.

\bibitem[Karoui(2009{\natexlab{b}})]{Karoui2009Concentration}
Noureddine~El Karoui.
\newblock Concentration of measure and spectra of random matrices: Applications
  to correlation matrices, elliptical distributions and beyond.
\newblock \emph{Annals of Applied Probability}, 19\penalty0 (6):\penalty0
  2362--2405, 2009{\natexlab{b}}.

\bibitem[Marchenko and Pastur(1967)]{marchenko1967distribution}
Vladimir~Alexandrovich Marchenko and Leonid~Andreevich Pastur.
\newblock Distribution of eigenvalues for some sets of random matrices.
\newblock \emph{Matematicheskii Sbornik}, 114\penalty0 (4):\penalty0 507--536,
  1967.

\bibitem[Pastur and Shcherbina(2011)]{pastur2011eigenvalue}
Leonid~Andreevich Pastur and Mariya Shcherbina.
\newblock \emph{Eigenvalue distribution of large random matrices}.
\newblock Number 171. American Mathematical Soc., 2011.

\bibitem[Paul and Aue(2014)]{Paul2014Random}
Debashis Paul and Alexander Aue.
\newblock Random matrix theory in statistics: A review.
\newblock \emph{Journal of Statistical Planning \& Inference}, 150:\penalty0
  1--29, 2014.

\bibitem[Paul and Silverstein(2009)]{Paul2009No}
Debashis Paul and Jack~W. Silverstein.
\newblock No eigenvalues outside the support of the limiting empirical spectral
  distribution of a separable covariance matrix.
\newblock \emph{Journal of Multivariate Analysis}, 100\penalty0 (1):\penalty0
  37--57, 2009.

\bibitem[Silverstein(1995)]{silverstein1995strong}
Jack~W. Silverstein.
\newblock \emph{Strong convergence of the empirical distribution of eigenvalues
  of large dimensional random matrices}.
\newblock Academic Press, Inc., 1995.

\bibitem[Silverstein and Bai(1995)]{silverstein1995empirical}
Jack~W Silverstein and ZD~Bai.
\newblock On the empirical distribution of eigenvalues of a class of large
  dimensional random matrices.
\newblock \emph{Journal of Multivariate analysis}, 54\penalty0 (2):\penalty0
  175--192, 1995.

\bibitem[Tropp(2012)]{Joel2012User}
Joel~A. Tropp.
\newblock User-friendly tail bounds for sums of random matrices.
\newblock \emph{Foundations of Computational Mathematics}, 12\penalty0
  (4):\penalty0 389--434, 2012.

\bibitem[Vershynin(2010)]{ver2010}
R~Vershynin.
\newblock Introduction to the non-asymptotic analysis of random matrices.
\newblock \emph{arXiv preprint}, arXiv:1011.3027, 2010.

\bibitem[Wachter(1978)]{Wachter1978The}
Kenneth~W. Wachter.
\newblock The strong limits of random matrix spectra for sample matrices of
  independent elements.
\newblock \emph{Annals of Probability}, 6\penalty0 (1):\penalty0 1--18, 1978.

\bibitem[Wagner et~al.(2012)Wagner, Couillet, Debbah, and
  Slock]{Sebastian2012Large}
Sebastian Wagner, Romain Couillet, Mérouane Debbah, and Dirk T.~M. Slock.
\newblock Large system analysis of linear precoding in correlated miso
  broadcast channels under limited feedback.
\newblock \emph{IEEE Transactions on Information Theory}, 58\penalty0
  (7):\penalty0 4509--4537, 2012.

\bibitem[Yin(1986)]{RePEc:eee:jmvana:v:20:y:1986:i:1:p:50-68}
Y.~Q. Yin.
\newblock Limiting spectral distribution for a class of random matrices.
\newblock \emph{Journal of Multivariate Analysis}, 20\penalty0 (1):\penalty0
  50--68, 1986.

\bibitem[Yin et~al.(1988)Yin, Bai, and Krishnaiah]{Yin1988}
Y.Q. Yin, Z.D. Bai, and P.R. Krishnaiah.
\newblock On the limit of the largest eigenvalue of the large dimensional
  sample covariance matrix.
\newblock \emph{Probability Theory and Related Fields}, 78\penalty0
  (4):\penalty0 pp. 509--521, 1988.
\newblock ISSN 0178-8051.
\newblock \doi{10.1007/BF00353874}.
\newblock URL \url{http://dx.doi.org/10.1007/BF00353874}.

\bibitem[Zhang(2006)]{Zhang2006Spectral}
Lixin Zhang.
\newblock Spectral analysis of large dimentional random matrices.
\newblock \emph{Ph.D.Thesis}, 2006.

\end{thebibliography}

\end{document}